\DeclareMathOperator{\cd}{cd}
\DeclareMathOperator{\Inf}{inf}
\DeclareMathOperator{\Ker}{Ker}
\DeclareMathOperator{\Res}{res}
\DeclareMathOperator{\rk}{rk}
  \newtheorem{thm}{Theorem}[section]
  \newtheorem{pro}[thm]{Proposition}
  \newtheorem{cor}[thm]{Corollary}
  \newtheorem{lem}[thm]{Lemma}
  \newtheorem{conj}[thm]{Conjecture}
  \theoremstyle{definition}
  \newtheorem{rem}[thm]{Remark}
  \newtheorem{defn}[thm]{Definition}
  \newtheorem{exam}[thm]{Example}
  \numberwithin{equation}{section}
\newcommand{\F}{\mathbb{F}}
\newcommand{\dbN}{\mathbb{N}}
\newcommand{\Z}{\mathbb{Z}}
\newcommand{\calG}{\mathcal{G}}
\newcommand{\K}{\mathbb{K}}
\newcommand{\FpX}{\mathbb{F}_p\langle X\rangle}
\newcommand{\calL}{\mathcal{L}}
\newcommand{\calV}{\mathcal{V}}
\newcommand{\calE}{\mathcal{E}}
\newcommand{\calB}{\mathcal{B}}
\newcommand{\calX}{\mathcal{X}}
\begin{document}

\title[Pro-{$p$} groups and universal Koszulity]{Pro-$p$ groups with few relations \\ and universal Koszulity}

\author{Claudio Quadrelli}
\address{Department of Mathematics\\
University of Milano-Bicocca\\
Via R. Cozzi 55 -- Ed. U5 \\
20125 Milan\\
Italy, EU}
\email{claudio.quadrelli@unimib.it}

\begin{abstract}
 Let $p$ be a prime. We show that if a pro-$p$ group with at most 2 defining relations has
quadratic $\F_p$-cohomology algebra, then this algebra is universally Koszul. This proves the
``Universal Koszulity Conjecture'' formulated by J.~Min\'a\v{c} et al. in the case of maximal pro-$p$
Galois groups of fields with at most 2 defining relations.
 \end{abstract}

\maketitle

\section{Introduction}
\label{Intro}

Let $k$ be a field, and $A_\bullet=\bigoplus_{n\geq0} A_n$ a $k$-algebra graded by $\dbN$.
The algebra $A_\bullet$ is quadratic if it is {\sl 1-generated} --- i.e., every element is a combination of products
of elements of $A_1$ ---, and its relations are generated by homogeneous relations of degree 2. 
E.g., symmetric algebras and exterior algebras are quadratic.

A quadratic algebra is called {\sl Koszul algebra} if $k$ admits a resoultion of free $\dbN$-graded right $A_\bullet$-modules such that for every $n\geq0$ the subspace of degree $n$ of the $n$-th term of the resolution is finitely generated, and such subspace generates the respective module (see sec.~\ref{ssec:UK}).
Koszul algebras were introduced by S. Priddy in \cite{priddy}, and they have exceptionally nice
behavior in terms of cohomology (see \cite[Ch.~2]{poliposi}). Koszul property is very restrictive, still it arises in various areas of mathematics, such as representation theory, algebraic geometry, combinatorics. Hence, Koszul algebras have become an important ob{\bf j}ect of study.

Recently, some stronger versions of the Koszul property were introduced and investigated (see, e.g., \cite{con:UK,CTV,piont,con:K,MPPT,pal}).
For example, the universal Koszul property (see Definition~\ref{defn:UK} below), which implies ``simple'' Koszulity.
Usually, checking whether a given quadratic algebra is Koszul is a rather hard problem.
Surprisingly, testing universal Koszulity may be easier, even though it is a more
restrictive property. 

Quadratic algebras and Koszul algebras have a prominent role in Galois theory, too. 
Given a field $\K$, for $p$ a prime number let $\calG_{\K}$ denote the maximal pro-$p$ Galois group of $\K$ --- namely, 
$\calG_{\K}$ is the Galois group of the maximal $p$-extension of $\K$.
If $\K$ contains a root of 1 of order $p$ (and also $\sqrt{-1}$ if $p = 2$), then the celebrated Rost-Voevodsky
Theorem (cf. \cite{voev,weibel}) implies that the $\F_p$-cohomology algebra 
$H^\bullet(\calG_{\K},\F_p) =\bigoplus_{n\geq0} H^n(\calG_{\K},\F_p)$
of the maximal pro-p Galois group of $\K$, endowed with the graded-commutative cup product
\[H^s(\calG_{\K},\F_p) \times H^t(\calG_{\K},\F_p)\overset{\cup}{\longrightarrow} H^{s+t}(\calG_{\K},\F_p), \qquad s, t \geq  0,\]
is a quadratic $\F_p$-algebra. 
Koszul algebras were studied in the context of Galois theory by L. Positselski and A. Vishik (cf. \cite{posivisi,posi:K}, see also \cite{MPQT}), and Positselski conjectured that the algebra $H^\bullet(\calG_{\K},\F_p)$ is Koszul (cf. \cite{posi:number}).
Positselski's conjecture was strengthened by J. Min\'a\v{c} et al. (cf. \cite[Conj. 2]{MPPT}):

\begin{conj}\label{conjecture:intro}
Let $\K$ be a field containing a root of 1 of order p, and suppose that the quotient $\K^\times/(\K^\times)^p$ is finite.
Then the $\F_p$-cohomology algebra $H^\bullet(\calG_{\K},\F_p)$ of the maximal pro-$p$ Galois group of $\K$ is universally Koszul.
\end{conj}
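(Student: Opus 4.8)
The plan is to establish universal Koszulity through the colon-ideal criterion of Conca \cite{con:UK}. Write $A_\bullet=H^\bullet(\calG_\K,\F_p)$. By the Rost--Voevodsky theorem $A_\bullet$ is quadratic, so it remains to prove that $(I:\alpha)$ is generated in degree $1$ for every ideal $I\subseteq A_\bullet$ generated in degree $1$ and every $\alpha\in A_1$; equivalently, that the family $\calL$ of degree-$1$-generated ideals of $A_\bullet$ is closed under these colon operations. The entire problem is thereby transferred to controlling colon ideals inside $A_\bullet$, and the hypotheses alone --- quadraticity together with finiteness of $A_1=\K^\times/(\K^\times)^p$ --- do not render the multiplicative structure of $A_\bullet$ explicit enough to do this directly.

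First I would reduce to a short list of building blocks by invoking the Elementary Type Conjecture for maximal pro-$p$ Galois groups: for $\mu_p\subseteq\K$ the group $\calG_\K$ should be assembled from free pro-$p$ groups and Demushkin groups through iterated free pro-$p$ products $G_1\amalg G_2$ and cyclotomic extensions by $\Z_p$. These operations act transparently on cohomology. A free pro-$p$ product yields the direct sum $H^\bullet(G_1)\sqcap H^\bullet(G_2)$ of quadratic algebras, glued in degree $0$ and with all cross cup products vanishing, while a cyclotomic extension adjoins a single degree-$1$ class and twists the product by the cyclotomic character. The blocks themselves are harmless: a free pro-$p$ group has $A_\bullet=\F_p\oplus A_1$ with trivial multiplication, hence is trivially universally Koszul, and a Demushkin group gives a Poincar\'e-duality algebra of dimension $2$ whose finite-dimensional quadratic cohomology can be checked to be universally Koszul directly from the colon criterion.

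The heart of the proof would then be a stability statement: universal Koszulity is inherited by both operations. For the direct sum $\sqcap$ one decomposes an arbitrary degree-$1$ ideal $I$ along the two factors and computes $(I:\alpha)$ componentwise, using the vanishing of cross products; I expect this step to be essentially formal, mirroring the good behaviour of $\calL$ under coproducts of quadratic algebras. The genuinely delicate operation is the cyclotomic extension, where the extra class and the character twist prevent $(I:\alpha)$ from splitting cleanly, so that one must track precisely how cup product with the cyclotomic class interacts with the degree-$1$ generators of $I$.

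This is where I expect the main obstacle, and it has two faces. On the one hand, the Elementary Type Conjecture is itself open, so a proof organised in this way is conditional until an unconditional structural description of $\calG_\K$ --- valid for arbitrarily many defining relations --- becomes available; at present such control exists only when the number of relations is small. On the other hand, even granting the structure theorem, verifying the colon criterion across the cyclotomic step grows combinatorially intractable as the number of relations increases, since the interplay of the cyclotomic class with a general degree-$1$ ideal produces colon ideals whose generation in degree $1$ is far from evident. Overcoming this, either through a uniform argument for the cyclotomic operation or by bypassing the Elementary Type Conjecture altogether, is the decisive point on which the full conjecture hinges.
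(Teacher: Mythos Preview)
The statement you are addressing is Conjecture~\ref{conjecture:intro}, which the paper does \emph{not} prove in general. The paper establishes only the special case in which $\calG_\K$ has at most two defining relations (Corollary~\ref{cor:intro}), deduced from Theorem~\ref{thm:intro} via a direct, elementary verification of the colon criterion that exploits $\dim H^2\leq 2$ and $H^n=0$ for $n\geq 3$ (see the proof of Theorem~\ref{thm:2rel}). There is therefore no ``paper's own proof'' of the full conjecture to compare against.

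Your proposal is not a proof but a conditional strategy, and you yourself identify both gaps. First, it rests on the Elementary Type Conjecture, which is open; invoking one open conjecture to establish another does not yield a proof. Second, even granting elementary type, you do not carry out the stability step for the operation $G\mapsto G\rtimes\Z_p$: you describe it as ``genuinely delicate'' and leave it undone, so even the conditional implication ``Elementary Type $\Rightarrow$ Conjecture~\ref{conjecture:intro}'' is not established by what you wrote. (The free-product step, by contrast, is known: universal Koszulity is preserved under $\sqcap$, cf.\ \cite[Prop.~30]{MPPT}.) Finally, note that Example~\ref{exam:square} exhibits a pro-$p$ group with quadratic $\F_p$-cohomology and only three relations whose cohomology is Koszul but not universally Koszul; hence any unconditional argument must use Galois-theoretic input beyond mere quadraticity, which is precisely what your appeal to the Elementary Type Conjecture stands in for.
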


Here $\K^\times$ denotes the multiplicative group $\K\smallsetminus\{0\}$, and by Kummer theory
$\K^\times /(\K^\times)^p$ is finite if and only if $\calG_{\K}$ is a finitely generated pro-$p$ group.

In this paper we study universal Koszulity for the $\F_p$-cohomology algebra of pro-$p$ groups with at most two {\sl defining relations}.  
A pro-$p$ group G has $m$ defining relations, with $m\geq0$, if there is a minimal pro-$p$ presentation $F/R$ of $G$ 
--- i.e., $G\simeq F/R$ with $F$ a free pro-$p$ group and $R$ a closed normal subgroup contained in the Frattini subgroup of $F$ --- such that $m$ is the minimal number of generators of $R$ as closed normal subgroup of $F$.
We prove the following.

\begin{thm}\label{thm:intro}
 Let $G$ be a finitely generated pro-$p$ group with {\bf a}t most two defining relations.
 If the $\F_p$-cohomology algebra $H^\bullet(G,\F_p)$ is quadratic (and moreover if $a^2 = 0$ for every $a \in H^\bullet(G,\F_2)$,
 if $p = 2$), then $H^\bullet(G,\F_p)$ is universally Koszul.
\end{thm}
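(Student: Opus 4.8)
\emph{Setup and reduction.} The plan is to reduce, via the structure theory of pro-$p$ groups with few defining relations, to a short list of explicit quadratic algebras, verify universal Koszulity for each such building block, and conclude using stability under the natural coproduct operation. Write $A^\bullet=H^\bullet(G,\F_p)$ and $V=A^1$. Since the number of defining relations of a finitely generated pro-$p$ group equals $\dim_{\F_p}H^2(G,\F_p)$, the hypothesis reads $\dim A^2\le2$; combined with quadraticity and, when $p=2$, with the assumption $a^2=0$ for all $a\in A^1$, this says that $A^\bullet$ is a quadratic quotient of the exterior algebra $\Lambda^\bullet V$. Dually, the cup product $\Lambda^2V\twoheadrightarrow A^2$ corresponds to a subspace $W\subseteq\Lambda^2V^*$ of alternating forms with $\dim W=\dim A^2\le2$, and $A^\bullet$ is the quadratic algebra on $V$ with relation space $R=\bigcap_{\beta\in W}\ker\beta\subseteq\Lambda^2V$. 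I would then show that, up to graded-algebra isomorphism, $A^\bullet$ is a \emph{coproduct} $A_1\sqcup\dots\sqcup A_s$ --- the operation on graded algebras which agrees in each positive degree with the direct sum and satisfies $A_i^{\ge1}A_j^{\ge1}=0$ for $i\ne j$, induced on $\F_p$-cohomology by free pro-$p$ products --- of algebras drawn from the explicit list: (i) the ``trivial'' algebras $\F_p\oplus U$ (cohomology of free pro-$p$ groups); (ii) the two-dimensional Poincar\'e duality algebras, i.e.\ the cohomology algebras $H^\bullet(D,\F_p)$ of Demushkin groups $D$ --- equivalently, the quadratic quotients of $\Lambda^\bullet U$ with $\dim A^2=1$ on which the induced alternating form is nondegenerate; and, occurring only when $\dim A^2=2$, (iii) the single algebra $\Lambda^\bullet\langle x,y,z\rangle/(xz)\cong H^\bullet(\Z_p\times F,\F_p)$ with $F$ free pro-$p$ of rank $2$. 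I would run this reduction by cases on $r=\dim A^2$: for $r=0$ only type (i) occurs; for $r=1$ one first notes that $A^n=0$ for $n\ge3$ (here $R=\ker\beta$ for a single nonzero alternating form $\beta$, and since the restriction of $\beta$ to every $3$-dimensional subspace is degenerate one gets $V\wedge R=\Lambda^3V$), after which splitting off the radical of $\beta$ displays $A^\bullet$ as the coproduct of a type (ii) factor with a possibly absent type (i) factor; for $r=2$ the analysis of the pencil $W$, together with realizability, leaves exactly two possibilities, namely two type (ii) factors or a single type (iii) factor, again in coproduct with a type (i) factor.

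\emph{The building blocks are universally Koszul.} Next I would apply Conca's characterization (\cite{con:UK}): a quadratic algebra $B^\bullet$ is universally Koszul precisely when, for every ideal $I$ of $B^\bullet$ generated in degree $1$ and every $a\in B^1$, the colon ideal $(I:a)$ is again generated in degree $1$. For type (i) this is immediate, as the ideals generated in degree $1$ are $0$, the linear subspaces of $U$, and the whole algebra, and colon ideals stay within this family. For type (ii), the fact that two-dimensional Poincar\'e duality algebras are universally Koszul is essentially in \cite{MPPT}: such an algebra is $\Lambda^\bullet\langle x,y\rangle$ modulo a codimension-$1$ subspace of $\Lambda^2$, and the criterion is checked directly. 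For type (iii) I would verify the criterion by hand for $A^\bullet=\Lambda^\bullet\langle x,y,z\rangle/(xz)$: here $A^n=0$ for $n\ge3$, so every ideal generated in degree $1$ has the shape $U\oplus UA^1$ for a subspace $U\subseteq A^1$, and $\Aut(A^\bullet)$ --- the stabilizer of the plane $\langle x,z\rangle\subseteq A^1$ --- acts with only finitely many orbits on pairs $(U,a)$ with $a\in A^1$; for a representative of each orbit one computes $(I:a)$ explicitly and checks that it is generated in degree $1$.

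\emph{Stability under coproducts.} Finally I would prove that a coproduct $A_1\sqcup\dots\sqcup A_s$ of universally Koszul quadratic algebras is universally Koszul; combined with the previous paragraph, this completes the proof. The delicate point is that an ideal of $A_1\sqcup\dots\sqcup A_s$ generated in degree $1$ need not be a ``box'' $I_1\oplus\dots\oplus I_s$ with each $I_j$ generated in degree $1$ in $A_j$ --- a ``diagonal'' linear subspace generates an ideal whose higher-degree part mixes the factors --- so one must first describe all ideals generated in degree $1$ in a coproduct, together with the behaviour of colon ideals there, before invoking Conca's criterion.

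\emph{Main obstacle.} The hard part will be the structural reduction of the first step: determining which quadratic algebras with $\dim A^2\le2$ are realized by pro-$p$ groups with at most two defining relations --- equivalently, classifying such pro-$p$ groups up to their $\F_p$-cohomology --- and, in particular, establishing that beyond free and Demushkin factors the only new indecomposable block is $\Lambda^\bullet\langle x,y,z\rangle/(xz)$, the cohomology of $\Z_p\times F$. I expect this to proceed through a Labute-style analysis of the initial forms of the two defining relations, or through the classification results available in the literature surrounding the elementary type conjecture. Once the list is secured, the remaining ingredients --- the explicit check of Conca's criterion for $\Lambda^\bullet\langle x,y,z\rangle/(xz)$ and the stability of universal Koszulity under coproducts --- are finite, if somewhat intricate, computations.
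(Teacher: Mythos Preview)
Your plan for $r\le1$ coincides with the paper's. For $r=2$, however, the structural reduction you propose is both unnecessary and, as stated, false. Realizability imposes \emph{no} constraint on the pencil $W$: given any two linearly independent alternating forms on $A_1$, the pro-$p$ group whose defining relations have the corresponding commutator words as initial parts modulo $F_{(3)}$ is quadratically defined, and by Proposition~\ref{prop:cd2} it has $\cd(G)=2$, so its $\F_p$-cohomology is precisely the quadratic algebra attached to that pencil. But indecomposable pencils exist on spaces of dimension larger than $3$. Concretely, take $d=4$ with $r_1\equiv[x_1,x_2][x_3,x_4]$ and $r_2\equiv[x_1,x_3]$ modulo $F_{(3)}$: the resulting algebra has $a_1a_2=a_3a_4\neq0$, $a_1a_3\neq0$ independent from $a_1a_2$, and $a_1a_4=a_2a_3=a_2a_4=0$. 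Converting the second form into an endomorphism $\phi$ of $A_1$ via the (symplectic) first form, one finds $\phi^2=0$ and that every $\phi$-invariant $2$-dimensional subspace is isotropic for the first form, so the pencil admits no orthogonal splitting. Hence $A_\bullet$ is not a nontrivial coproduct, and since no element of $A_1$ annihilates all of $A_1$ there is no free factor either; with $\dim A_1=4$ it fits neither your ``two type~(ii) factors'' nor your ``one type~(iii) factor''. The literature surrounding the elementary type conjecture offers no rescue here --- that conjecture is open, and in any case every pencil is realized.

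The paper sidesteps all of this. Its two-relator case (Theorem~\ref{thm:2rel}) is a direct, uniform verification of the colon-ideal criterion that works for \emph{every} such algebra at once: since $A_n=0$ for $n\ge3$, one has $(I:(b))_2=A_2$ automatically for any $I\in\calL(A_\bullet)$ and $b\in A_1\smallsetminus I_1$, so it remains only to show $J_1\cdot A_1=A_2$ for $J=I:(b)$. Using $b\in J_1$ (as $b^2=0$) and a fixed basis $\{a_1a_2,\,a_sa_t\}$ of $A_2$ consisting of pure products, a short case split on $\dim(b\cdot A_1)\in\{0,1,2\}$ produces in each case enough elements of $J_1$ --- namely $b$ together with suitable combinations $a_i-\lambda a_j$ annihilated by $b$ --- to generate $A_2$. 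No classification of pencils, no coproduct-stability lemma, and no separate ``building block'' verifications are needed.
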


The above result settles positively Conjecture~\ref{conjecture:intro} for fields whose maximal
pro-$p$ Galois group has at most two defi{\bf n}ing relations.

\begin{cor}\label{cor:intro}
 Let $\K$ be a field containing a root of 1 of order $p$ (and also $\sqrt{-1}$ if $p = 2$),
 and suppose that the quotient $\K^\times /(\K^\times)^p$ is finite.
 If $\calG_{\K}$ has at most two defining relations then the $\F_p$-cohomology algebra $H^\bullet(\calG_{\K},\F_p)$ is
universally Koszul.
\end{cor}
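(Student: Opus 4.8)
The plan is to deduce Corollary~\ref{cor:intro} directly from Theorem~\ref{thm:intro} applied to the pro-$p$ group $G=\calG_{\K}$, so that the only work is to check that $\calG_{\K}$ satisfies every hypothesis of the theorem. First, since $\K$ contains a root of $1$ of order $p$, Kummer theory gives an isomorphism $H^1(\calG_{\K},\F_p)\cong\K^\times/(\K^\times)^p$, and the right-hand side is finite by assumption; hence $\calG_{\K}$ is a finitely generated pro-$p$ group, as required. Second, by hypothesis $\calG_{\K}$ has at most two defining relations. Thus the remaining point is quadraticity of $H^\bullet(\calG_{\K},\F_p)$ --- together with, when $p=2$, the vanishing of all squares --- and for quadraticity I would simply invoke the Rost--Voevodsky Theorem recalled in the Introduction: it identifies $H^\bullet(\calG_{\K},\F_p)$ with the mod-$p$ Milnor $K$-ring of $\K$ as graded $\F_p$-algebras, and the latter is $1$-generated with its defining (Steinberg) relations concentrated in degree $2$, so $H^\bullet(\calG_{\K},\F_p)$ is a quadratic $\F_p$-algebra.

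When $p=2$ one must still verify that $a^2=0$ for every $a\in H^\bullet(\calG_{\K},\F_2)$, and this is where the hypothesis $\sqrt{-1}\in\K$ enters. Under the identification $H^1(\calG_{\K},\F_2)\cong\K^\times/(\K^\times)^2$ the class $(-1)$ of $-1$ is trivial, since $-1$ is a square in $\K$; combined with the standard identity $a\cup a=(-1)\cup a$ for every $a\in H^1$, this gives $a\cup a=0$ for all $a\in H^1(\calG_{\K},\F_2)$. As $H^\bullet(\calG_{\K},\F_2)$ is $1$-generated and of characteristic $2$, every class of positive degree is a sum of products of degree-$1$ classes; expanding the square of such a sum, the cross terms vanish because $2=0$ and the diagonal terms vanish because each degree-$1$ square is zero, so $a^2=0$ in every positive degree, the degree-$0$ case being trivial. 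Hence all hypotheses of Theorem~\ref{thm:intro} hold for $\calG_{\K}$, and the theorem yields that $H^\bullet(\calG_{\K},\F_p)$ is universally Koszul.

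There is no genuine obstacle once Theorem~\ref{thm:intro} is available: the corollary is a formal consequence. The one point worth attention is the case $p=2$, where it is essential to notice that the extra assumption $\sqrt{-1}\in\K$ in the statement is exactly what turns the square-zero hypothesis of Theorem~\ref{thm:intro} into an automatic property of $H^\bullet(\calG_{\K},\F_2)$; without it the reduction would not go through.
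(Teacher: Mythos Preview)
Your proof is correct and follows essentially the same route as the paper: reduce to Theorem~\ref{thm:intro} by using Kummer theory for finite generation, the Rost--Voevodsky Theorem for quadraticity, and---when $p=2$---the identity $a\cup a=(-1)\cup a$ together with $\sqrt{-1}\in\K$ to obtain the square-zero condition (the paper packages this last step as Lemma~\ref{lemma:k2}). If anything, your treatment of the $p=2$ case is slightly more explicit, since you spell out why vanishing of degree-$1$ squares propagates to all degrees in a $1$-generated commutative $\F_2$-algebra.
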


Note that the condition on the number of defining relations of $\calG_{\K}$ {\bf m}ay be formulated both in terms of the dimension of $H^2(\calG_{\K},\F_p)$ and in terms of the Brauer group of $\K$:
namely, $\calG_{\K}$ has at most two defining relations if and only if $\dim(H^2(\calG_{\K},\F_p))\leq2$, and if and only if the $p$-part of the Brauer group of $\K$ has rank at most two.

Theorem~\ref{thm:intro} can not be extended to pro-$p$ groups with quadratic $\F_p$-cohomology
with more than two defining relations, as there are finitely generated pro-$p$ groups
with three defining relations whose $\F_p$-cohomology algebra is quadratic and Koszul, but not
universally Koszul (see Example~\ref{exam:square}). 
Still, such examples are expected not to contradict Conjecture~\ref{conjecture:intro}, as these pro-$p$ groups are conjectured not to occur as maximal pro-$p$ Galois of groups of fields (see Remark~\ref{rem:PZ}).


\section{Quadratic algebras and Koszul algebras}\label{sec:quadalgebras}

Throughout the paper every graded algebra $A_\bullet=\bigoplus_{n\in\Z}A_n$ is tac{\bf i}tly assumed to be a unitary associative algebra over the finite field $\F_p$, and non-negatively graded of finite-type, i.e., $A_0=\F_p$, $A_n = 0$ for $n < 0$ and $\dim(A_n)<\infty$ for $n\geq1$.
For a complete account on graded algebras and their cohomology, we direct the reader to the first chapters of \cite{poliposi} and of \cite{ldval}, and to \cite[\S~2]{MPQT}.


\subsection{Quadratic algebras}\label{ssec:quad}

A graded algebra $A_\bullet=\bigoplus_{n\geq0}A_n$ is said to be {\sl graded-commutative} if o{\bf n}e has
\begin{equation}\label{eq:gradcomm}
 b\cdot a = (-1)^{ij} a\cdot b \qquad\text{for every }a\in A_i , b\in A_j.
\end{equation}
In particular, if $p$ is odd then one has $a^2 = 0$ for all $a\in A_\bullet$, whereas if $p = 2$ then a graded-commutative algebra is commutative. 
Furthermore, if $p = 2$ we call a commutative algebra $A_\bullet$ which satisfies $a^2 = 0$ for all $a\in A_\bullet$ a {\sl wedge-commutative} $\F_2$-algebra.

For a graded ideal $I$ of a graded algebra $A_\bullet$, $I_n$ denotes the intersection $I\cap A_n$ for every $n\geq0$, i.e., $I=\bigoplus_{n\geq0} I_n$.
For a subset $\Omega\subseteq A_\bullet$, $(\Omega)\unlhd A_\bullet$ denotes the two-sided graded ideal generated by $\Omega$.
Also, $A_+$ denotes the {\sl augmentation ideal} of $A_\bullet$, i.e., $A_+=\bigoplus_{n\geq1} A_n$.
Henceforth all ideals are assumed to be graded.

Given a finite vector space $V$, let $T_\bullet(V)=\bigoplus_{n\geq0}V^{\otimes n}$ denote the tensor algebra generated by $V$. The product of $T_\bullet(V)$ is induced by the tensor product, i.e., $ab = a\otimes b\in V^{\otimes s+t}$ for $a\in V^{\otimes s}$ , $b \in V^{\otimes t}$.

\begin{defn}\label{defn:quadratic}
 A graded algebra $A^\bullet$ is said to be quadratic if one has {\bf a}n isomorphism
\[ A_\bullet\simeq T_\bullet(A_1)(\Omega)\]
for some subset $\Omega\subseteq A_1\otimes A_1$. In this case we write $A_\bullet= Q(V, \Omega)$.
\end{defn}

\begin{exam}\label{exam:quad}
 let $V$ be a finite vector space.
 \begin{itemize}
  \item[(a)] The tensor algebra $T_\bullet(V)$ and the trivial quadratic algebra $Q(V, V^{\otimes2})$ are quadratic algebras.
 \item[(b)] The symmetric algebra $S_\bullet(V)$ and the exterior algebra $\Lambda_\bullet(V)$ are quadratic, as one has 
 $S_\bullet(V)=Q(V,\Omega_S)$ and $\Lambda_\bullet(V)=Q(V,\Omega_\wedge)$ with
\[\Omega_S = \{u \otimes v-v\otimes u \mid u, v\in V \}
\qquad\text{and}\qquad
\Omega_\wedge=\{u\otimes v+v\otimes u \mid u,v\in V \}.\]
 \item[(c)] Let $\F_p\langle X\rangle$ be the free algebra generated by the indeterminates $X =\{X_1,\ldots,X_d\}$.
 Then $\FpX$ is a graded algebra, with the grading induced by the subspa{\bf c}es of homogeneous polynomials.
 If $\Omega=\{f_1,\ldots,f_m\}\subseteq\FpX$ is a set of homogeneous polynomials of degree 2, then $\FpX/(\Omega)$ is a quadratic algebra.
 \end{itemize}
\end{exam}

\begin{exam}\label{exam:prod}
Let $A_\bullet=Q(A_1,\Omega_A)$ and $B_\bullet=Q(B_1,\Omega_B)$ be two quadratic algebras.
The direct product of $A_\bullet$ and $B_\bullet$ is the quadratic algebra $$A_\bullet\sqcap B_\bullet= Q(A_1\oplus B_1,\Omega),$$
with $\Omega=\Omega_A\cup \Omega_B\cup (A_1\otimes B_1)\cup(B_1\otimes A_1)$.
\end{exam}

\begin{exam}\label{exam:graph}
 Let $\Gamma=(\mathcal{V},\mathcal{E})$ be a finite combinatorial graph (without loops) --- 
 namely $\mathcal{V}=\{v_1 ,\ldots, v_d\}$ is the set of vertices of $\Gamma$, and 
 $$\mathcal{E} \subseteq\{\{v, w\} \mid v, w \in \mathcal{V}, v\neq w\} = \mathcal{P}_2(\mathcal{V})\smallsetminus\Delta(\mathcal{V})$$
is the set of edges of $\Gamma$ ---, and let V be the space with basis $\mathcal{V}$.
The {\sl exterior Stanley-Reisner algebra} $\Lambda_\bullet(\Gamma)$ associated to $\Gamma$ is the quadratic algebra
\[
 \Lambda_\bullet(\Gamma)=\dfrac{\Lambda_\bullet(V)}{(v\wedge w\mid \{v,w\}\notin \mathcal{E})}.
\]
In particular, $\Lambda_\bullet(\Gamma)$ is graded-commutative (wedge-commutative if $p=2$), and if $\Gamma$ is complete (i.e., $\mathcal{E}=\mathcal{P}_2(\mathcal{V})\smallsetminus\Delta(V))$ then $\Lambda_\bullet(\Gamma)\simeq\Lambda_\bullet(V)$, whereas if $\mathcal{E}=\varnothing$, then $\Lambda_\bullet(\Gamma)\simeq Q(V,V^{\otimes2})$.
\end{exam}


\subsection{Koszul algebras and universally Koszul algebras}\label{ssec:UK}

A quadratic algebra $A_\bullet$ is said to be {\sl Koszul} if it admits a resolution
\[
 \xymatrix{ \cdots\ar[r] & P(2)_\bullet\ar[r] & P(1)_\bullet\ar[r] & P(0)_\bullet \ar[r] & \F_p }
\]

of right $A_\bullet$-modules, where for each $i\in\dbN$, $P(i)_\bullet=\bigoplus_{n\geq0}P(i)_n$ is a free {\sl graded}
$A_\bullet$-module such that $P(n)_n$ is finitely generated for all $n\geq0$, and $P(n)_n$ generates $P(n)_\bullet$ as graded $A_\bullet$-module (cf. \cite[Def. 2.1.1]{poliposi} and \cite[\S~2.2]{MPQT}).
Koszul algebras have an exceptionally nice {\bf b}ehavior in terms of cohomology.
Indeed, if a quadratic algebra $A_\bullet=Q(V,\Omega)$ is Koszul, then one has an isomorphism of quadratic algebras
\begin{equation}\label{eq:quaddual}
  \bigoplus_{n\geq0}\mathrm{Ext}_{A_\bullet}^{n,n}(\F_p,\F_p)\simeq Q(V^\ast,\Omega^\perp),
\end{equation}
where $V^\ast$ denotes the $\F_p$-dual of $V$, and $\Omega^\perp\subseteq(V\otimes V)^{\ast}$ is the orthogonal of $\Omega\subseteq V\otimes V$ (cf. \cite{priddy}) --- since $V$ is finite, we identify $(V^\ast)^{\otimes2}=(V^{\otimes2})^\ast$ ---,
whereas $\mathrm{Ext}_{A_\bullet}^{i,j}(\F_p,\F_p)=0$ for $i\neq j$ (in fact, this is an equivalent definition of
the Koszul property).

On the one hand, by \eqref{eq:quaddual} it is very easy to compute the $\F_p$-cohomology of a Koszul algebra.
On the other hand, in general it is extremely hard to establish whether a given quadratic algebra is Koszul.
For this reason, some ``enhanced forms'' of Kosz{\bf u}lity --- which are stronger than ``simple'' Koszulity, but at the same time easier to check --- have been introduced by several authors. 
We give now the definition of {\sl universal Koszulity} as introduced in \cite{MPPT}.

Given two ideals $I, J$ of a graded algebra $A_\bullet$, the {\sl colon ideal} $I : J$ is the ideal
\[ I : J = \{a\in A_\bullet\mid a\cdot J \subseteq I\}.\]

\begin{rem}\label{rem:colon}
 Note that for every two ideals $I, J$ of $A_\bullet$, the colon ideal $I : J$ contains all $a\in A_\bullet$ such that 
 $a\cdot J = 0$, as $0 \in I$.
 Moreover, if $A_\bullet$ is graded-commutative (and wedge-commutative, if $p = 2$), then {\bf f}or every for every $b\in J$
one has $b\in I : J$, as $b\cdot b = 0$.
\end{rem}

For a quadratic algebra $A_\bullet$, let $\calL(A_\bullet)$ denote the set of all ideals of $A_\bullet$ generated by a subset of $A_1$, namely, 
$$\calL(A_\bullet) = \{I \in A_\bullet \mid I=A_\bullet\cdot I_1 \}.$$
Note that both the trivial ideal $(0)$ and the augmentation ideal $A_+$ belong to $\calL(A_\bullet)$.

\begin{defn}\label{defn:UK}
 A quadratic algebra $A_\bullet$ is said to be {\sl universally Koszul} if for every ideal $I\in \calL(A_\bullet)$, and every $b\in A_1\smallsetminus I_1$, one has $I:(b)\in\calL(A_\bullet)$.
\end{defn}

Universal Koszulity is stronger than Koszulity, since every quadratic algebra
which is universally Koszul is also Koszul (cf. \cite[\S~2.2]{MPPT}).

\begin{exam}\label{exam:UK}
 \begin{itemize}
  \item[(a)] Let $V$ be a vector space of {\bf f}inite dimension. Then both the trivial algebra $Q(V,V^{\otimes2})$ (by definition) and the exterior algebra $\Lambda_\bullet(V)$ (by \cite[Prop. 31]{MPPT}) are universally Koszul. 
\item[(b)] If $A_\bullet$ and $B_\bullet$ are two quadratic universally Koszul algebras, then also
the direct product $A_\bullet\sqcup B_\bullet$ is universally Koszul (cf. \cite[Prop. 30]{MPPT}).
 \end{itemize}
\end{exam}

\begin{exam}\label{exam:DemushkinUK}
 For $V$ a finite vector space of even dimension $d$ and basis $\{v_1,\ldots,v_d\}\subseteq V$, let $A_\bullet$ be the quadratic algebra $A_\bullet=\Lambda_\bullet(V)/(\Omega)$, where
\[
\Omega=\left\{\begin{array}{c} v_1\wedge v_2-v_i\wedge v_{i+1}, \text{ for }i =1,3,\ldots,d-1, \\
v_i\wedge v_j,\text{ for }i<j,(i,j)\neq(1,2),(3,4),\ldots,(d-1,d)
              \end{array}\right\}\subseteq \Lambda_2(V)\]

In particular, $A_2$ is generated by the image {\bf o}f $v_1 \wedge v_2$, and $A_n= 0$ for $n\geq3$.
Then $A_\bullet$ is isomorphic to the $\F_p$-cohomology algebra of a $d$-generated Demushkin pro-$p$ group
(cf. \cite[Def.~3.9.9]{nsw:cohn}), and thus it is universally Koszul by \cite[Prop.~29]{MPPT}.
\end{exam}

\begin{exam}\label{ex:RAAG no UK}
 Let $\Gamma =(\calV,\calE)$ be a combinatorial graph without loops, and let $\Lambda_\bullet(\Gamma)$ be the exterior 
Stanley-Reisner algebra associated to $\Gamma$.
Then $\Lambda_\bullet(\Gamma)$ is Koszul (cf. \cite{priddy}, see also \cite[\S~3.2]{papa} and \cite[\S~4.2.2]{thomas}).
Moreover, by \cite[Thm.~4.6]{CQ:RAAG} the algebra $\Lambda_\bullet(\Gamma)$ is also universally Koszul if and only if $\Gamma$ has the diagonal property --- i.e., for a{\bf n}y four vertices $v_1,\ldots,v_4\in\calV$ such that
$$\{v_1 , v_2 \}, \{v_2 , v_3 \}, \{v_3 , v_4 \} \in \calE,$$
then one has $\{v_1,v_3\}\in\calE$ or $\{v_2,v_4\}\in\calE$ (see, e.g., \cite{droms}).
\end{exam}


\section{Two-relator pro-$p$ groups}\label{sec:2rel}

Henceforth, every subgroup of a pro-p group is meant to be closed with respect to the pro-p topology, and generators are topological generators.
For (closed) subgroups $H, H_1, H_2$ of a pro-$p$ group $G$ and for every $n\geq 1$, $H_n$ is the subgroup of $G$ generated by $n$-th powers of the elements of $H$, whereas $[H_1,H_2]$ is the subgroup of $G$ generated by commutators
$$[g_1,g_2]=g_1^{-1}\cdot g_1^{g_2}=g_1^{-1}g_2^{-1}g_1g_2 ,$$
with $g_1\in H_1$ and $g_2\in H_2$.


\subsection{Cohomology of pro-$p$ groups}

For a pro-$p$ group $G$ we set
\[ G_{(2)}=G^p[G,G],\qquad G_{(3)}=\begin{cases}G^p[G,[G, G]] & \text{if }p\neq2, \\ 
                                  G^4[G, G]^2[G,[G,G]]& \text{if } p= 2 \end{cases}\]
--- namely, $G_{(2)}$ and $G_{(3)}$ are the second and the third elements of the $p$-Zassenhaus filtration of $G$ (cf. \cite[\S~3.1]{MPQT}).
In particular, $G_{(2)}$ coincid{\bf e}s with the Frattini subgroup of $G$.

A short exact sequence of pro-p groups
\begin{equation}\label{eq:pres}
\xymatrix{ \{1\}\ar[r] & R\ar[r] & F\ar[r] & G\ar[r] & \{1\} } 
\end{equation}
with $F$ a free pro-$p$ group, is called a {\sl presentation} of the pro-$p$ group $G$.
If $R\subseteq F_{(2)}$, then the presentation \eqref{eq:pres} is {\sl minimal} --- roughly speaking, $F$ and $G$ have the ``same'' minimal generating system.
For a minimal presentation \eqref{eq:pres} of $G$, a set of elements of $F$ which generates minimally $R$ as normal subgroup is
called a set of {\sl defining relations} of $G$.

For a pro-$p$ group $G$ we shall denote the $\F_p$-cohomology groups $H^n(G,\F_p)$ simply by $H^n(G)$ for every $n\geq0$.
In particular, one has 
\begin{equation}\label{eq:H1}
 H^0(G)=\F_p\qquad \text{and}\qquad H^1(G)\simeq(G/G_{(2)})^\ast
\end{equation}
(cf. \cite[Prop.~3.9.1]{nsw:cohn}).
Moreover, a minimal presentation \eqref{eq:pres} of $G$ induces an exact sequence in cohomology
\begin{equation}\label{eq:5tes}
  \xymatrix{ 0\ar[r] & H^1(G)\ar[r]^-{\Inf_{F,R}^1} & H^1(F)\ar[r]^-{\Res_{F,R}^1} & H^1(R)^{F} \ar[r]^-{\mathrm{trg}_{F,R}} &
H^2(G) \ar[r]^-{\Inf_{F,R}^2} & H^2(F)}
\end{equation}
(cf. \cite[Prop.~1.6.7]{nsw:cohn}) --- if $V$ is a continuous $G$-module for a pro-$p$ group $G$, then $V^G$ denotes the subspace of $G$-invariants.
Since \eqref{eq:pres} is minimal, by \eqref{eq:H1} the map $\Inf^1_{F,R}$ is an isomorphism.
Moreover, also the map $\mathrm{trg}_{F,R}$ is an isomorphism, as $H^2(F) = 0$ (see Proposition~\ref{prop:free} below), and its inverse induces an isomorphism $\phi$ of vector spaces
\begin{equation}\label{eq:H2}
 H^2(G)\overset{\phi}{\longrightarrow}\left((R/R_{(2)})^\ast\right)^F=(R/R^p[R,F])^\ast
\end{equation}
By \eqref{eq:H1}, $\dim(H^1(G))$ is the minimal number of generators of $G$, and by \eqref{eq:H2} $\dim(H^2(G))$ is the number of defining relations of $G$.
If $H^1(G)$ and $H^2(G)$ have both finite dimension, then $G$ is said to be {\sl finitely presented}.

The $\F_p$-cohomology of a pro-$p$ group comes endowed with the bilinear {\sl cup-product}
\[
 H^i(G)\times H^j(G)\overset{\cup}{\longrightarrow} H^{i+j}(G),
\]
which is graded-commutative (cf. \cite[Ch.~I, \S~4]{nsw:cohn}).
The maximum positive integer $n$ such that $H^n(G)\neq0$ and $H^{n+1}(G)=0$ is called the cohomological dimension
of $G$, and it is denoted by $\cd(G)$ (cf. \cite[Def.~3.3.1]{nsw:cohn}).

By \eqref{eq:H2}, if $G$ is a free pro-p group then $H^2(G)= 0$.
Also the converse is true (cf. \cite[Prop.~3.5.17]{nsw:cohn}).

\begin{pro}\label{prop:free}
  A pro-$p$ group $G$ is free if and only if $\cd(G) = 1$.
\end{pro}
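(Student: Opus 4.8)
The statement to prove is Proposition~\ref{prop:free}: a pro-$p$ group $G$ is free if and only if $\cd(G)=1$.

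The plan is to prove the two implications separately. For the forward direction, suppose $G$ is free pro-$p$. Then by the discussion preceding the proposition (specifically the isomorphism \eqref{eq:H2} applied to a minimal presentation of $G$ by itself, with trivial relation module $R$), one gets $H^2(G)=0$. More generally, one should invoke that free pro-$p$ groups have $\cd \leq 1$: this follows because a free pro-$p$ group $F = F(X)$ has a very short free resolution of $\F_p$ as a trivial $\F_p[[F]]$-module, namely $0 \to \bigoplus_X \F_p[[F]] \to \F_p[[F]] \to \F_p \to 0$, so $H^n(F,M) = 0$ for all $n \geq 2$ and all discrete $F$-modules $M$. Since $H^1(G) \neq 0$ whenever $G \neq \{1\}$ (and the case $G=\{1\}$ is degenerate/excluded since then $\cd(G)=0$), we conclude $\cd(G)=1$.

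For the reverse direction, this is the substantive implication and I would prove it cohomologically. Assume $\cd(G)=1$; in particular $H^2(G,\F_p)=0$. Take a minimal presentation $1 \to R \to F \to G \to 1$ with $F$ free pro-$p$; this exists by choosing a minimal generating set of $G$ and lifting. I want to show $R = \{1\}$. The key point is that the exact sequence \eqref{eq:5tes} shows $\mathrm{trg}_{F,R}\colon H^1(R)^F \to H^2(G)$ is an isomorphism (since $H^1(G) \hookrightarrow H^1(F)$ is onto by minimality and $H^2(F)=0$), hence $H^1(R)^F = 0$, which via \eqref{eq:H2} gives $(R/R^p[R,F])^\ast = 0$, i.e.\ $R = R^p[R,F] = R_{(2)}[R,F]$. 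Now one uses the topological Nakayama-type lemma for pro-$p$ groups: since $R$ is a closed normal subgroup of $F$ with $R = R^p[R,F]$, and $R/R^p[R,F]$ is the maximal elementary abelian $p$-quotient of $R$ that is "co-invariant" under $F$-conjugation, the vanishing of this quotient forces $R$ itself to be trivial. More precisely, if $R \neq \{1\}$ then $R$ has a proper open normal subgroup (normal in $F$) with nontrivial elementary abelian quotient on which $F$ acts, contradicting $R = R^p[R,F]$; one formalizes this with the fact that a nontrivial pro-$p$ group has nontrivial $\F_p$-homology, applied to $R$ as an $F$-group. Hence $R = \{1\}$ and $G \simeq F$ is free.

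The main obstacle is the Nakayama/Frattini argument in the reverse direction: deducing $R = \{1\}$ from $R = R^p[R,F]$. One must be careful that it is the \emph{normal} Frattini-type quotient $R/R^p[R,F]$ (relations as a normal subgroup) rather than the ordinary Frattini quotient $R/R^p[R,R]$ that vanishes, and argue that even this weaker-looking condition is enough. The clean way is: $R$ is a pro-$p$ group on which $F$ acts, $R^p[R,F]$ is the closure of the subgroup generated by $p$-th powers and commutators $[r,f]$, and $R/R^p[R,F] \cong (\widehat{R}_{\ab} \otimes \F_p)_F$ where $\widehat{R}_{\ab}$ is the abelianization; if this is zero then by Nakayama's lemma over the complete local ring $\F_p[[G]]$ (noting $R_{\ab}\otimes\F_p$ is a compact $\F_p[[G]]$-module with zero coinvariants hence zero), we get $R_{\ab}\otimes\F_p = 0$, so $R_{\ab} = 0$, so $R = 0$. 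I would cite \cite[Prop.~3.9.1]{nsw:cohn} or the standard reference for free pro-$p$ groups having cohomological dimension one and for the converse, essentially reproducing \cite[Prop.~3.5.17]{nsw:cohn} as indicated in the text, and keep the self-contained argument brief.
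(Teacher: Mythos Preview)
Your proof is correct, but note that the paper does not actually give its own proof of this proposition: it is stated with a citation to \cite[Prop.~3.5.17]{nsw:cohn} and used as a black box. What you have written is essentially a reconstruction of the standard argument found there (the forward direction via the short free resolution of $\F_p$ over $\F_p[[F]]$, and the converse via the five-term sequence \eqref{eq:5tes} combined with a Nakayama-type argument over $\F_p[[G]]$), so there is nothing to compare against beyond the reference itself.

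One minor remark on your write-up: in the last step you pass from $R_{\ab}\otimes\F_p=0$ to ``$R_{\ab}=0$, so $R=0$''. It may be worth making explicit that $R_{\ab}\otimes\F_p=0$ means $R=R^p[R,R]=R_{(2)}$, i.e.\ $R$ equals its own Frattini subgroup, which for a pro-$p$ group forces $R=\{1\}$; alternatively, as you indicate, one first gets $R_{\ab}=pR_{\ab}$ and hence $R_{\ab}=0$ since $R_{\ab}$ is abelian pro-$p$, and then $R=[R,R]$ forces $R=\{1\}$. Either way the argument is sound.
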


Let $G$ be a finitely generated pro-$p$ group with minimal presentation \eqref{eq:pres}.
We may identify $H^1(G)$ and $H^1(F)$ via the isomorphism $\Inf^1_{F,R}$. 
Also, we may identify a basis $\calX=\{x_1,\ldots,x_d\}$ of $F$ with its image in $G$. 
Let $\calB=\{a_1,\ldots,a_d\}$ be the basis of $H^1(G)$ dual to $\calX$, i.e., $a_i(x_j)=\delta_{ij}$ for $i,j\in\{1,\ldots,d\}$. For every $r\in F_{(2)}$ one may write

\begin{equation}\label{eq:relations}
\begin{split}
 r&=\prod_{i<j}[x_i,x_j]^{\alpha_{ij}}\cdot r',\qquad\text{if } p\neq2, \\ 
 r&=\prod_{i=1}^dx_i^{2\alpha_{ii}}\prod_{i<j}[x_i,x_j]^{\alpha_{ij}}\cdot r',\qquad\text{if } p=2, \\ 
\end{split}\end{equation}
for some $r'\in F_{(3)}$, with $0\leq\alpha_{ij}<p$, and these numbers are uniquely determined by $r$.
The shape of the defining relations of a pro-$p$ group and the behavior of the cup-product are related by the following (cf. \cite[Prop.~1.3.2]{vogel}).

\begin{pro}\label{prop:vogel}
 Let $G$ be a finitely presented pro-$p$ group with minimal presentation \eqref{eq:pres}, and let $\calX=\{x_1,\ldots,x_d\}$ and $\calB=\{a_1,\ldots,a_d\}$ be as above.
Given a set of defining relations $\{r_1,\ldots,r_m\}\subseteq F_{(2)}$, for every $h=1,\ldots,m$ the isomorphism $\phi$ (see \eqref{eq:H2}) induces a morphism
\[\begin{split}
   &\mathrm{tr}_h\colon H^2(G)\longrightarrow\F_p, \\  &\mathrm{tr}_h(b)= \phi(b)(r_h),
  \end{split}\]
such that, for every $1\leq i\leq j\leq d$, one has $\mathrm{tr}_h(a_i\cup a_j)=-\alpha_{ij}$, where the $\alpha_{ij}$'s are the numbers in \eqref{eq:relations} with $r = r_h$.
\end{pro}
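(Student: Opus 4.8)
The plan is to compute the cup-product class $a_i\cup a_j$ by means of the transgression, and then to evaluate it on $r_h$ with the help of the Magnus embedding of $F$. Recall from the discussion following \eqref{eq:5tes} that $\mathrm{trg}_{F,R}\colon H^1(R)^F\to H^2(G)$ is an isomorphism whose inverse is $\phi$, after the canonical identification $H^1(R)^F=(R/R^p[R,F])^\ast$. Hence, writing $\chi_{ij}\in H^1(R)^F$ for the unique class with $\mathrm{trg}_{F,R}(\chi_{ij})=a_i\cup a_j$, we get $\mathrm{tr}_h(a_i\cup a_j)=\phi(a_i\cup a_j)(r_h)=\chi_{ij}(r_h)$; so the task reduces to identifying $\chi_{ij}$ explicitly as an $\F_p$-valued homomorphism on $R$ and evaluating it on $r_h$.

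To identify $\chi_{ij}$ I would pass to the Magnus embedding $\mu\colon F\hookrightarrow\F_p[[F]]\cong\F_p\langle\langle X_1,\dots,X_d\rangle\rangle$ given by $x_k\mapsto 1+X_k$ (cf.\ \cite[\S~3.1]{MPQT} and the references therein), and write $\mu(f)=1+\sum_k a_k(f)X_k+\sum_{k,l}\epsilon_{kl}(f)X_kX_l+\cdots$, where the dots denote terms of degree $\ge 3$, and where the coefficient of $X_k$ is indeed $a_k$ (both are continuous homomorphisms $F\to\F_p$ sending $x_l$ to $\delta_{kl}$). Comparing the degree-$2$ coefficients in $\mu(ff')=\mu(f)\mu(f')$ gives $\epsilon_{kl}(ff')=\epsilon_{kl}(f)+\epsilon_{kl}(f')+a_k(f)a_l(f')$, which says precisely that the $2$-cochain $(f,f')\mapsto a_k(f)a_l(f')$ on $F$ --- a cocycle representing $a_k\cup a_l\in H^2(F)=0$ --- is, up to sign, the coboundary of $\epsilon_{kl}$. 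Since $a_k$ kills $R\subseteq F_{(2)}$, the displayed identity degenerates on $R$ and shows that $\epsilon_{kl}|_R\colon R\to\F_p$ is a homomorphism; and a short computation --- using $\epsilon_{kl}(f^{-1})+\epsilon_{kl}(f)=a_k(f)a_l(f)$ to treat the conjugation $r\mapsto f^{-1}rf$ --- shows that $\epsilon_{kl}|_R$ is $F$-invariant, hence defines a class in $H^1(R)^F$. Now pick a continuous section $s\colon G\to F$ of \eqref{eq:pres}, with factor set $\rho(g,g')=s(g)s(g')s(gg')^{-1}\in R$; substituting $f=s(g)$ and $f'=s(g')$ into the coboundary identity, and using that $a_k$ vanishes on $R$ to split $\epsilon_{kl}\bigl(\rho(g,g')\,s(gg')\bigr)$, one finds that the standard cocycle representing $a_i\cup a_j$ on $G$ agrees, modulo a coboundary, with $(g,g')\mapsto\pm\epsilon_{ij}(\rho(g,g'))$. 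By the factor-set description of the transgression, together with the injectivity of $\mathrm{trg}_{F,R}$, this forces $\chi_{ij}=\pm\epsilon_{ij}|_R$, whence $\mathrm{tr}_h(a_i\cup a_j)=\pm\epsilon_{ij}(r_h)$.

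It remains to evaluate $\epsilon_{ij}$ on a relation $r$ written as in \eqref{eq:relations}. One has $\mu([x_k,x_l])=1+(X_kX_l-X_lX_k)+\cdots$, one has $\mu(x_k^2)=(1+X_k)^2=1+X_k^2$ in characteristic $2$, and for $r'\in F_{(3)}$ one has $\mu(r')\equiv 1$ modulo terms of degree $\ge 3$ --- the standard description of the $p$-Zassenhaus filtration via the powers of the augmentation ideal of $\F_p[[F]]$ (cf.\ \cite[\S~3.1]{MPQT}). Expanding the product in \eqref{eq:relations}, the degree-$2$ part of $\mu(r)-1$ is therefore the sum of the degree-$2$ parts of the individual factors, namely $\sum_{k<l}\alpha_{kl}(X_kX_l-X_lX_k)$ if $p\ne 2$, and $\sum_k\alpha_{kk}X_k^2+\sum_{k<l}\alpha_{kl}(X_kX_l+X_lX_k)$ if $p=2$; reading off the coefficient of $X_iX_j$ gives $\epsilon_{ij}(r)=\alpha_{ij}$ for every $1\le i\le j\le d$ (with the convention $\alpha_{ii}=0$ when $p$ is odd, consistently with $a_i\cup a_i=0$). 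Combined with the previous paragraph this yields $\mathrm{tr}_h(a_i\cup a_j)=-\alpha_{ij}$, once the signs in the definitions of the cup product, of the coboundary map, and of the transgression are normalized as in \cite{vogel}. The one genuinely delicate point, I expect, is exactly this sign bookkeeping --- in particular verifying that the factor-set formula computes the transgression and that the chosen normalizations yield the minus sign rather than a plus sign; the power-series and cochain manipulations are otherwise routine.
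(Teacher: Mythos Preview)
The paper does not actually give a proof of this proposition; it is stated with the citation ``cf.\ \cite[Prop.~1.3.2]{vogel}'' and used as a black box. Your argument via the Magnus embedding $x_k\mapsto 1+X_k$ into $\F_p\langle\langle X_1,\dots,X_d\rangle\rangle$, reading off the degree-$2$ coefficients $\epsilon_{ij}$, and identifying $\epsilon_{ij}|_R$ with the transgression preimage of $a_i\cup a_j$, is precisely the classical approach and is essentially what one finds in Vogel's thesis (and, in slightly different language, in \cite[Prop.~3.9.13--3.9.14]{nsw:cohn}). The cocycle identity $\epsilon_{kl}(ff')=\epsilon_{kl}(f)+\epsilon_{kl}(f')+a_k(f)a_l(f')$, the $F$-invariance check, the factor-set description of the transgression, and the evaluation $\epsilon_{ij}(r)=\alpha_{ij}$ from the expansion of $\mu([x_k,x_l])=1+X_kX_l-X_lX_k+\cdots$ are all correct.

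Your honest flagging of the sign as the only genuinely delicate point is appropriate: whether one obtains $-\alpha_{ij}$ or $+\alpha_{ij}$ depends on the conventions for the commutator ($[g_1,g_2]=g_1^{-1}g_2^{-1}g_1g_2$ here), for the inhomogeneous coboundary, and for the transgression; with the conventions of \cite{nsw:cohn} and \cite{vogel} the minus sign is the correct one. In short, there is nothing to compare against in the paper itself, and your sketch is the standard proof.
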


\begin{exam}\label{exam:onerel}
 Let $G$ be a finitely generated one-relator pro-$p$ group, with minimal presentation \eqref{eq:pres} and defining relation $r$, and with $\calX=\{x_1,\ldots,x_d\}$ and $\calB=\{a_1,\ldots,a_d\}$ as above.
 Since $\dim(H^2(G))=1$ by \eqref{eq:H2}, the algebra $H^\bullet(G)$ is quadratic, and wedge-commutative if $p = 2$, if and only if $H^2(G)$ is generated by some non-trivial $a_i\cup a_j$ (and also $a_i\cup a_i=0$ for all $i=1,\ldots,d$, if $p=2$) and $H^3(G)=0$: by Proposition~\ref{prop:vogel} this occurs if and only if $\alpha_{ij}\neq0$ for some $i,j$, with the $\alpha_{ij}$'s as in \ref{eq:relations} (i.e., $r\notin F_{(3)}$), and also $\alpha_{ii}=0$ for every $i=1,\ldots,d$ if $p = 2$ (see \cite[Prop. 4.2]{cq:onerel} for the details).

 In this case, one may choose $\calX$ such that $\alpha_{1,2}=\alpha_{3,4}=\ldots=\alpha_{s-1,s}=1$, for some even $s\leq d$, and $\alpha_{ij}=0$ for all other couples $(i,j)$, so that one has an isomorphism of quadratic algebras
\[H^\bullet(G)\simeq A_\bullet\sqcup Q(V,V^{\otimes2}),\]
where $A_\bullet$ is the quadratic algebra as in Example~\ref{exam:DemushkinUK}, (with $A_1$ generated by $a_1,\ldots,a_s$, and $A_2$ generated by $a_1\cup a_2=\ldots=a_{s-1}\cup a_s$), and $V$ a finite (possibly trivial) vector space, generated by $a_i$ with $s+1\leq i\leq d$ (cf. \cite[Prop. 4.6]{cq:onerel}).
In particular, $H^\bullet(G)$ is universally Koszul by Example~\ref{exam:UK}--(b).
\end{exam}


\subsection{Two-relator pro-$p$ groups and cohomology}\label{ssec:2rel cohom}

Henceforth $G$ will be a finitely generated two-relator pro-$p$ group, with minimal presentation \eqref{eq:pres}. 
Also, the set $\calX=\{x_1,\ldots,x_d\}$ will denote a basis of $F$ (identified with its image in $G$), with $d=\dim(H^1(G))$, and $\calB=\{a_1,\ldots,a_d\}$ will be the associated dual basis of $H^1(G)$.
For simplicity, we will omit the symbol $\cup$ to denote the cup-product of two elements of $H^\bullet(G)$.
For our convenience, we slightly modify the definition given in \cite[\S~1]{BGLV}.

\begin{defn}\label{defn:quaddef}
A two-relator pro-$p$ group $G$ is {\sl quadratically defined} if the cup-product induces an epimorphism 
$H^1(G)^{\otimes2}\to H^2(G)$, and also $a\cdot a=0$ for every $a\in H^1(G)$ in the case $p=2$.
\end{defn}

By Proposition~\ref{prop:vogel}, $G$ is quadratically defined if and only if $r_1,r_2\in F_{(2)}\smallsetminus F_{(3)}$
for any set of defining relations $\{r_1,r_2\}\subseteq F_{(2)}$, and also $\alpha_{ii} =0$ for every $i=1,\ldots,d$ in the case $p=2$, where the $\alpha_{ii}$'s are the numbers in \eqref{eq:relations} with $r=r_1,r_2$ (see also \cite[Thm.~7.3]{MPQT}).

Set $I=\{1,\ldots,d\}$, and let $\succ$ denote the lexicographic order on $I^2=\{(i,j)\mid 1\leq i,j\leq d\}$ --- 
namely, $(i,j)\succ (h, k)$ if $i>h$ or if $i = h$ and $j > k$.
If $G$ is quadratically defined, by Proposition~\ref{prop:vogel} and \cite[Rem.~2.5]{QSV} one may choose defining relations 
$r_1,r_2\in F_{(2)}$ such that
\begin{equation}\label{eq:rel2}
\begin{split}
 r_1\equiv &[x_1,x_2]\cdot\prod_{\substack{1\leq i<j\leq d\\(i,j)\succ(1,2)}}[x_i,x_j]^{\alpha_{ij}}\mod F_{(3)}, \\ 
 r_2\equiv &[x_s,x_t]\cdot\prod_{\substack{1\leq i<j\leq d\\(i,j)\succ(s,t)}}[x_i,x_j]^{\beta_{ij}}\mod F_{(3)}, \\ 
\end{split}\end{equation}
for some $(s,t)\succ(1,2)$, and $0\leq\alpha_{ij},\beta_{ij}\leq p-1$, with $\alpha_{st} = 0$.
By Proposition~\ref{prop:vogel}, one has  
\[\mathrm{tr}_1(a_1a_2)=1 \qquad\text{and}\qquad \mathrm{tr}_1(a_ia_j) = \alpha_{ij} ,\]
for $i\leq j$ and $(i,j)\succ(1, 2)$, and likewise
\[\mathrm{tr}_2(a_sa_t)=1 \qquad\text{and}\qquad \mathrm{tr}_2(a_ia_j) =\begin{cases} 0 , &\text{ if }(i, j) \prec (s, t),
\\ \beta_{ij}, &\text{ if }(i, j) \succ (s, t).
 \end{cases}\]
Altogether, $\{a_1a_2,a_sa_t\}$ is a basis of $H^2(G)$, and one has relations
\begin{equation}\label{eq:relH2}
 \begin{split}
  a_i a_i &= 0, \\
a_j a_i &= -a_i a_j ,\\
a_i a_j &= \alpha_{ij}(a_1 a_2 ) + \beta_{ij} (a_s a_t ),\qquad i < j,
 \end{split}\end{equation}
were we set implicitely $\alpha_{1,2}=\beta_{st}=1$ and $\beta_{ij}=0$ for $(i, j)\prec(s,t)$.
Finally, one has the following (cf. \cite[Thm.~1--2]{BGLV}).

\begin{pro}\label{prop:cd2}
 Let $G$ be a finitely generated quadratically defined two-relator pro-$p$ group. Then $\cd(G) = 2$.
\end{pro}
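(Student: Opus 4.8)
The plan is to show $\cd(G)=2$ by proving $H^n(G)=0$ for all $n\geq 3$, and it suffices to handle $n=3$ since the higher vanishing then follows from the standard fact that for a pro-$p$ group $\cd(G)\leq 2$ once $H^3(G)=0$ (equivalently, one may argue inductively using cup products and the ring structure, but the cleanest route is to appeal directly to $\cd(G)\leq 2$ being detected in degree $3$). So the core task is $H^3(G)=0$. Recall from \eqref{eq:5tes} that a minimal presentation $1\to R\to F\to G\to 1$ gives, after dimension shifting, a relation between the cohomology of $G$ and that of $R$: since $F$ is free, $\cd(F)=1$, and the Lyndon--Hochschild--Serre spectral sequence degenerates enough to yield exact sequences
\[
 H^n(G)\longrightarrow H^{n-1}(R)^{F}\longrightarrow 0 \qquad (n\geq 2),
\]
and more precisely $H^n(G)\cong H^{n-1}\bigl(G, H^1(R)\bigr)$ for $n\geq 2$ via the (spectral-sequence) edge maps, because $H^q(F,-)=0$ for $q\geq 2$. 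Thus $H^3(G)\cong H^2(G, H^1(R))$, where $H^1(R)=\Hom(R/R_{(2)},\F_p)$ carries its natural $G$-action.

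The next step is to control the $G$-module $M:=H^1(R)=\Hom(R/R_{(2)},\F_p)$. Since $G$ is two-relator, $R$ is generated as a normal subgroup of $F$ by two elements $r_1,r_2$, so $R/R_{(2)}$ is generated by the images of $r_1,r_2$ over the group ring; by \eqref{eq:H2}, the coinvariants $(R/R_{(2)})_F$ have dimension $2$, so $M$ has exactly $2$-dimensional $F$-invariants (dually). Because $G$ is quadratically defined, the relations $r_1,r_2$ lie in $F_{(2)}\smallsetminus F_{(3)}$, which via Proposition~\ref{prop:vogel} pins down the $G$-action on $M$ modulo the augmentation ideal: the action on $M/(\text{augmentation})$ is trivial and the ``first-order'' part is governed precisely by the structure constants $\alpha_{ij},\beta_{ij}$ appearing in \eqref{eq:relH2}. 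The plan is to use this to present $M$ as an extension built from the two ``relation lines'' $\F_p r_1^\ast$ and $\F_p r_2^\ast$ together with a free part, and then compute $H^2(G,M)$ by dévissage: $H^2(G,\F_p(\text{trivial}))=H^2(G)$ is $2$-dimensional, and one checks the connecting maps in the long exact sequences kill exactly the right classes, so that $H^2(G,M)=0$. Concretely one expects to invoke the Magnus/Fox-calculus description of $R/R_{(2)}$ as a quotient of a free $\F_p[[G]]$-module of rank $2$ and identify the relevant syzygies.

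An alternative, and probably smoother, route is to appeal to the theorem of Labute (and its pro-$p$ refinements, as in \cite{BGLV}) characterizing ``mild'' pro-$p$ presentations: a two-relator pro-$p$ group whose initial forms $r_1,r_2$ in the graded Lie algebra $\mathrm{gr}(F)$ form a strongly free (inert) sequence satisfies $\cd(G)=2$, and one then verifies that quadratic-definedness forces the degree-$2$ initial forms $\rho_1=\sum\alpha_{ij}[x_i,x_j]$, $\rho_2=\sum\beta_{ij}[x_i,x_j]$ (with leading terms $[x_1,x_2]$ and $[x_s,x_t]$ respectively, $(s,t)\succ(1,2)$) to be a strongly free pair in the free restricted Lie algebra --- this is essentially automatic from the normal form \eqref{eq:rel2} because the leading commutators are distinct, so no nontrivial Lie relation among them can occur in low degree, and strong freeness of two quadratic elements with distinct leading brackets is a short linear-algebra check. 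The main obstacle I anticipate is precisely this verification of strong freeness (equivalently, that the two relations impose ``independent'' conditions in every degree, not just degree $2$): degree $2$ is easy from the chosen normal form, but ensuring there is no collapse in higher degrees --- i.e.\ that $G$ is genuinely mild --- requires either the quadratic-Koszul hypothesis feeding back (the quadratic cohomology algebra being the Koszul dual of the quadratic restricted Lie algebra, hence forcing the Hilbert series to match that of a mild presentation) or a direct Anick/Gröbner-basis argument on $\F_p\langle X\rangle/(\rho_1,\rho_2)$. I would carry it out by first establishing the spectral-sequence reduction $H^3(G)\cong H^2(G,H^1(R))$, then proving strong freeness of $\{\rho_1,\rho_2\}$ using the distinct leading brackets, and finally quoting the mild-presentation theorem to conclude $\cd(G)=2$.
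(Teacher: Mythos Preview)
The paper does not prove this proposition itself: it is stated with a bare citation to \cite[Thm.~1--2]{BGLV}. Your second route---verifying that the pair $\rho_1,\rho_2$ of degree-$2$ initial forms is strongly free (mild) and then invoking Labute's theorem that mild presentations have cohomological dimension $2$---is exactly the content of that reference, so in substance you have located the paper's argument. You are also right that the strong-freeness verification is the crux; calling it ``a short linear-algebra check'' is a bit optimistic when the leading brackets $[x_1,x_2]$ and $[x_s,x_t]$ share a variable (e.g.\ $s=1$ or $s=2$), and \cite{BGLV} handle precisely this with some care.

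One correction to your first route: the dimension shift coming from the Lyndon--Hochschild--Serre spectral sequence for $1\to R\to F\to G\to 1$ with $F$ free is by \emph{two}, not one. Since $R$ is again a free pro-$p$ group, the spectral sequence has only the rows $q=0,1$, and vanishing of $H^{\geq2}(F)$ forces the transgression $d_2\colon H^{k}(G,H^1(R))\to H^{k+2}(G)$ to be an isomorphism for $k\geq1$ (and an epimorphism for $k=0$, recovering \eqref{eq:H2}). Hence $H^3(G)\cong H^{1}(G,H^1(R))$, not $H^{2}(G,H^1(R))$. This does not invalidate the strategy, but the d\'evissage you sketch would have to be carried out in degree~$1$ rather than degree~$2$; in practice this is no easier than the mildness argument, which is why the paper simply defers to \cite{BGLV}.
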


As a consequence we obtain the following.

\begin{pro}\label{prop:2rel}
 Let $G$ be a finitely generated two-relator pro-p. The following are equivalent:
 \begin{itemize}
\item[(i)] $H^\bullet(G)$ is quadratic (and wedge-commutative, if $p = 2$);
\item[(ii)] $G$ is quadratically defined.
 \end{itemize}
\end{pro}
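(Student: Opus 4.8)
The plan is to prove the equivalence of (i) and (ii) in Proposition~\ref{prop:2rel} by establishing both implications, most of the work having already been set up in the preceding discussion.

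First I would prove (i)$\Rightarrow$(ii). Suppose $H^\bullet(G)$ is quadratic (and wedge-commutative if $p=2$). Quadraticity forces $H^\bullet(G)$ to be $1$-generated, so in particular the cup-product map $H^1(G)^{\otimes 2}\to H^2(G)$ must be surjective: every element of $H^2(G)$ is a combination of products of degree-$1$ elements, and since the algebra is graded with $H^1(G)$ generating it, those products of degree-$1$ elements are precisely cup-products $a_i\cup a_j$. Together with the wedge-commutativity hypothesis in characteristic $2$, this is exactly the condition in Definition~\ref{defn:quaddef}, so $G$ is quadratically defined. (Concretely, via Proposition~\ref{prop:vogel}, surjectivity of the cup-product is equivalent to $r_1,r_2\in F_{(2)}\smallsetminus F_{(3)}$ for a set of defining relations, and $a\cdot a=0$ translates to $\alpha_{ii}=0$.)

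For the converse (ii)$\Rightarrow$(i), assume $G$ is quadratically defined. By Proposition~\ref{prop:cd2} we have $\cd(G)=2$, so $H^n(G)=0$ for all $n\geq 3$, and $H^\bullet(G)$ is concentrated in degrees $0,1,2$. Then I would exhibit $H^\bullet(G)$ explicitly as a quadratic algebra: take $V=H^1(G)$ with basis $\calB=\{a_1,\ldots,a_d\}$, and let $\Omega\subseteq V\otimes V$ be spanned by the relations in \eqref{eq:relH2}, i.e.\ the elements $a_i\otimes a_i$, the elements $a_j\otimes a_i + a_i\otimes a_j$, and the elements $a_i\otimes a_j - \alpha_{ij}(a_1\otimes a_2) - \beta_{ij}(a_s\otimes a_t)$ for $i<j$. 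One checks that the natural surjection $T_\bullet(V)\to H^\bullet(G)$ (which exists because the cup-product makes $H^\bullet(G)$ into a $1$-generated graded algebra, using surjectivity of the cup-product in degree $2$ and the vanishing in degrees $\geq 3$) has kernel generated in degree $2$ by exactly $\Omega$: in degree $2$ the kernel is the full set of relations among the $a_ia_j$, which by the discussion around \eqref{eq:relH2} is spanned by $\Omega$, while in degrees $\geq 3$ the kernel is everything, which is automatically generated by $\Omega$ together with degree-$2$ relations since $A_3=0$ forces $V\otimes\Omega + \Omega\otimes V = V^{\otimes 3}$ — this last point needs the small check that $\{a_1a_2, a_sa_t\}$ being a basis of the $2$-dimensional space $H^2(G)$ means $\dim \Omega = d^2 - 2$, and then a dimension count shows $V\cdot\Omega+\Omega\cdot V$ fills $V^{\otimes 3}$ because any quadratic algebra with these relations has trivial degree-$3$ part (the wedge-commutativity plus the rank-$2$ condition collapses all triple products). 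Hence $H^\bullet(G)\simeq Q(V,\Omega)$ is quadratic, and wedge-commutative when $p=2$ by the hypothesis $\alpha_{ii}=0$.

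I expect the main obstacle to be the degree-$3$ verification in the direction (ii)$\Rightarrow$(i): one must confirm that the quadratic algebra $Q(V,\Omega)$ defined by the relations \eqref{eq:relH2} actually has vanishing component in degree $3$ and higher, matching $\cd(G)=2$, rather than just surjecting onto $H^\bullet(G)$ with possibly extra relations appearing in higher degree. This amounts to showing that the relations already present in degree $2$ generate the entire ideal, which is where Proposition~\ref{prop:cd2} is essential — it pins down that $H^\bullet(G)$ has no higher cohomology — combined with the explicit computation that the $2$-dimensional span of $a_1a_2$ and $a_sa_t$ is annihilated by every $a_k$ (so that $H^2(G)\cdot H^1(G)=0=H^3(G)$ consistently), which follows from wedge-commutativity and graded-commutativity applied to the relations in \eqref{eq:relH2}. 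Everything else is bookkeeping with the already-established structural results.
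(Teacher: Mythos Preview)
Your overall strategy matches the paper's: the direction (i)$\Rightarrow$(ii) is immediate from the definition, and for (ii)$\Rightarrow$(i) one uses Proposition~\ref{prop:cd2} to get $H^n(G)=0$ for $n\geq3$, then must verify that the degree-$2$ relations already generate the kernel of $T_\bullet(V)\twoheadrightarrow H^\bullet(G)$ (equivalently, that the quadratic algebra $Q(V,\Omega)$ has trivial degree-$3$ part). You correctly flag this last point as the main obstacle.

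However, your treatment of that obstacle is where the gap lies. None of the three justifications you offer actually proves it. First, the ``dimension count'': knowing $\dim\Omega=d^2-2$ gives $\dim(V\otimes\Omega)=\dim(\Omega\otimes V)=d^3-2d$, but without controlling the intersection you cannot conclude $V\otimes\Omega+\Omega\otimes V=V^{\otimes3}$. Second, ``wedge-commutativity plus the rank-$2$ condition collapses all triple products'' is a restatement of the claim, not an argument; a generic codimension-$2$ quotient of $\Lambda_2(V)$ does not automatically kill $\Lambda_3$. Third, saying that $(a_1a_2)\cdot a_k=0$ and $(a_sa_t)\cdot a_k=0$ ``follows from wedge-commutativity and graded-commutativity applied to the relations in \eqref{eq:relH2}'' hides the actual computation: you must use the \emph{specific} coefficients $\alpha_{ij},\beta_{ij}$ and the ordering condition $\beta_{ij}=0$ for $(i,j)\prec(s,t)$. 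The paper carries this out explicitly (working in $\Lambda_\bullet(H^1(G))$ rather than $T_\bullet(V)$, which is a bit cleaner): it introduces $b_{ij}=a_i\wedge a_j-\alpha_{ij}(a_1\wedge a_2)-\beta_{ij}(a_s\wedge a_t)\in\Ker(\psi_2)$ and then, through a chain of four computations wedging these with suitable $a_h$, shows every $a_h\wedge a_k\wedge a_l$ lies in $\Ker(\psi_2)\wedge H^1(G)$. The first two steps (getting $a_1\wedge a_2\wedge a_h$ into the ideal) genuinely use the vanishing of certain $\beta$'s coming from the lexicographic normalization \eqref{eq:rel2}; this is not formal. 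So your outline is right, but the heart of the proof --- roughly a page of explicit manipulation --- is missing, and the placeholders you put there do not survive scrutiny.
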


\begin{proof}
Assume first that $H^\bullet(G)$ is quadratic (and wedge-commutative, if $p = 2$).
Then one has $H^n(G)=0$ for $n\geq3$, while the cup-product induces epimorphisms
\[\begin{split}
H^1(G)^{\otimes2}\longrightarrow\Lambda_2(H^1(G))\longrightarrow H^2(G), \qquad &\text{if }p\neq2, \\
H^1(G)^{\otimes2}\longrightarrow\dfrac{S_2(H^1(G))}{\langle a^2\mid a\in H^1(G)\rangle}\longrightarrow H^2(G), 
\qquad &\text{if }p = 2,  \end{split}
\]
and thus $G$ is quadratically defined.

Assume now that $G$ is quadratically defined, and let $r_1,r_2$ be defining relations as in \eqref{eq:rel2}.
By Proposition~\ref{prop:cd2}, for $n\geq3$ one has $H^n(G)=0$, whereas $H^2(G)$ is generated by $a_1 a_2$ and $a_s a_t$,
so that $H^\bullet(G)$ is 1-generated. 
In fact, by the relations \eqref{eq:relH2} one has epimorphisms of graded algebras 
$\psi_\ast\colon \Lambda_\bullet(H^1(G))\to H^\bullet(G)$ --- if $p = 2$, with an abuse of notation we set
\[\Lambda_\bullet(H^1(G))=\frac{S_\bullet(H^1(G))}{(a^2\mid a\in H^1(G))}\]
---, with $\Ker(\psi_n)=\Lambda_n(H^1(G))$ for $n\geq3$.
We claim that $$\Ker(\psi_2)\wedge H^1(G)=\Lambda_3(H^1(G)),$$ which implies that $\Ker(\psi_\ast)$ is the two-sided ideal 
of $\Lambda_\bullet(H^1(G))$ generated by $\Ker(\psi_2)$.

By \eqref{eq:relH2}, $\Ker(\psi_2)$ is the subspace of $\Lambda_2(H^1(G))$ generated by the elements
\[b_{ij}:= a_i \wedge a_j - \alpha_{ij} (a_1 \wedge a_2 ) - \beta_{ij}(a_s \wedge a_t ),\qquad1\leq i < j \leq d.\]
First, suppose that $s=1$.
Then one has $b_{ij}=a_i\wedge a_j-a_1\wedge(\alpha_{ij}a_2+\beta_{ij}a_t)$ for every $i<j$, and thus
\begin{equation}\label{eq:proof0}
\begin{split}
  a_1 \wedge b_{2,t} &= a_1 \wedge a_2 \wedge a_t - a_1\wedge a_1\wedge(\alpha_{2,t}a_2+\beta_{2,t}a_t)\\
&= a_1 \wedge a_2 \wedge a_t - 0 , \end{split}
\end{equation}
so that $a_1 \wedge a_2 \wedge a_t\in\Ker(\psi_2)\wedge H^1(G)$.
Now, for any value of $s\geq1$ and for every $h\geq3$, one has
\begin{equation}\label{eq:proof1}
\begin{split}
  a_2 \wedge b_{1,h} &= a_2 \wedge a_1 \wedge a_h - \alpha_{1,h} \cdot a_2 \wedge (a_1 \wedge a_2 ) -\beta_{1,h}\cdot a_2\wedge (a_s \wedge a_t )\\
&= -a_1 \wedge a_2 \wedge a_h - 0 - \beta_{1,h}(a_2\wedge a_s \wedge a_t ). \end{split}
\end{equation}
If $s=1$ then by \eqref{eq:proof1} $a_1 \wedge a_2 \wedge a_h$ lies in $\Ker(\psi_2)\wedge H^1(G)$, as $a_2\wedge a_1\wedge a_t$ does.
If $s\geq2$ then $(1, h) \prec (s, t)$ --- hence $\beta_{1,h} = 0$, and \eqref{eq:proof1} yields 
$a_1 \wedge a_2 \wedge a_h\in\Ker(\psi_2)\wedge H^1(G)$.
Similarly, for every $h = 1,\ldots, d$, $h\neq s,t$, one has
\begin{equation}\label{eq:proof2}
\begin{split}
a_t\wedge b_{s,h} &= a_t\wedge a_s\wedge a_h-\alpha_{s,h}\cdot a_t\wedge(a_1\wedge a_2)-\beta_{s,h}\cdot a_t\wedge(a_s\wedge a_t)\\
&= -a_s \wedge a_t \wedge a_h - \alpha_{s,h}(a_1 \wedge a_2 \wedge a_t ) - 0.\end{split}
\end{equation}
Thus, $a_s\wedge a_t\wedge a_h\in\Ker(\psi_2)\wedge H^1(G)$, as $a_1\wedge a_2\wedge a_t\in\Ker(\psi_2)\wedge H^1(G)$ by \ref{eq:proof1}.
Finally, for every $1\leq h < k < l \leq d$ one has
\begin{equation}\label{eq:proof3}
\begin{split}
a_h\wedge b_{kl}&= a_h\wedge a_k\wedge a_l-\alpha_{kl}\cdot a_h\wedge(a_1\wedge a_2)-\beta_{kl}\cdot a_h\wedge(a_s\wedge a_t)\\
&= a_h\wedge a_k\wedge a_l-\alpha_{kl}(a_1\wedge a_2\wedge a_h)-\beta_{kl}(a_s\wedge a_t\wedge a_h),
\end{split}
\end{equation}
and thus $a_h\wedge a_k \wedge a_l \in \ker(\psi_2)\wedge H^1(G)$.
Therefore, $\Ker(\psi_2)\wedge H^1(G)=\Lambda_3(H^1(G))$, and this yields the claim.
\end{proof}


\subsection{Universally Koszul cohomology}\label{ssec:UK 2rel}

The next result shows that the $\F_p$-cohomology of a quadratically defined two-relator pro-$p$ group is universally Koszul.

\begin{thm}\label{thm:2rel}
 Let $G$ be a finitely generated quadratically defined two-relator
pro-$p$ group. Then $H^\bullet(G)$ is universally Koszul.
\end{thm}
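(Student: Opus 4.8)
The plan is to verify the defining condition of universal Koszulity directly: given an ideal $I\in\calL(H^\bullet(G))$ generated by a subspace $I_1\subseteq H^1(G)$ and an element $b\in H^1(G)\smallsetminus I_1$, one must show that the colon ideal $I:(b)$ again lies in $\calL(H^\bullet(G))$, i.e.\ it is generated in degree $1$. Since $\cd(G)=2$ by Proposition~\ref{prop:cd2}, the algebra $H^\bullet(G)$ is concentrated in degrees $0,1,2$, so $I:(b)$ can fail to be $1$-generated only through a degree-$2$ obstruction: an element $c\in H^2(G)$ with $c\cdot b=0$ (automatic, as $H^3(G)=0$) — wait, that is automatic — so more precisely the content is that $(I:(b))_1$ generates $(I:(b))_2$, equivalently $(I:(b))_2 = b^\perp$ in the appropriate sense together with $I_2$. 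Concretely $(I:(b))_2 = \{c\in H^2(G): \text{no condition}\}$ is all of $H^2(G)$ since $H^3=0$, so the only thing to check is in degree $1$: $(I:(b))_1 = \{a\in H^1(G): a\cdot b\in I_2\}$, and one must show the ideal generated by this subspace has the correct degree-$2$ part, namely equals $I:(b)$. So the real statement to prove is: $H^1(G)\cdot (I:(b))_1 + (\text{nothing in degree }3) = H^2(G)$ is not what's needed; rather $(I:(b))_2$ should equal $H^1(G)\cdot(I:(b))_1$ inside $H^2(G)$, but since $(I:(b))_2=H^2(G)$ we need $(I:(b))_1$ to generate all of $H^2(G)$ under cup product, OR $I=H^\bullet(G)$ already. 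Thus the crux reduces to: for every $I\in\calL$ and every $b\notin I_1$, the subspace $W_b(I):=\{a\in H^1(G): ab\in I_2\}$ satisfies $W_b(I)\cdot H^1(G) = H^2(G)$ — unless $I$ itself is already such that $I:(b)$ is visibly in $\calL$.

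The key steps, in order: (1) reduce to the explicit model \eqref{eq:relH2}, using the basis $\calB=\{a_1,\dots,a_d\}$ and the two cup-product generators $a_1a_2$ and $a_sa_t$ of $H^2(G)$, so that cup product becomes the bilinear map on $H^1(G)$ described by the structure constants $\alpha_{ij},\beta_{ij}$; (2) given $I=H^\bullet(G)\cdot I_1$, choose a basis of $I_1$ adapted to $\calB$, and compute $I_2 = H^1(G)\cdot I_1$ explicitly as a subspace of the $2$-dimensional space $H^2(G)$ — so $\dim I_2\in\{0,1,2\}$; (3) case-split on $\dim I_2$. If $\dim I_2 = 2$ then $I_2 = H^2(G)$ so $I:(b)=H^\bullet(G)\in\calL$ trivially. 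If $\dim I_2 = 0$, then $I_1$ is a "cup-product-trivial" subspace, i.e.\ $a\cdot a' = 0$ for all $a,a'\in I_1$; here one shows $I:(b)$ is generated by $I_1$ together with those $a_i$ with $a_ib=0$, analyzing the bilinear form $a\mapsto a\cdot b$ on $H^1(G)$ (its rank and kernel, using that $b\neq 0$ so $a\mapsto ab$ is not identically zero unless $b$ is cup-product-trivial against everything). If $\dim I_2 = 1$, say $I_2 = \F_p\cdot\xi$ for some nonzero $\xi\in H^2(G)$, then $W_b(I) = \{a: ab\in \F_p\xi\}$, which is the preimage of a line under a linear map $H^1(G)\to H^2(G)$, hence of codimension $\le 2$; one must check that $b\in W_b(I)$ (true if $b^2=0$, which holds in the case $p$ odd always and in the case $p=2$ by the wedge-commutativity hypothesis built into "quadratically defined"), and that $W_b(I)\cdot H^1(G)$ recovers all of $H^2(G)$ or else $I:(b)$ collapses correctly.

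The main obstacle I expect is step (3) in the sub-case $\dim I_2=1$: one needs a clean structural description of which lines $\xi\subseteq H^2(G)$ arise as $I_2$ for $I\in\calL$, and then to show $W_b(I)$ is genuinely large enough to generate $H^2(G)$ (equivalently that $I:(b)$ has degree-$2$ part equal to $H^2(G)$ and is $1$-generated there). This is where the two-relator, $\cd=2$ structure does the work: because $H^2(G)$ is only $2$-dimensional and every product of two degree-$1$ elements lands there, the map $H^1(G)\otimes H^1(G)\to H^2(G)$ is very far from injective, so $W_b(I)$ is forced to be of small codimension, and one exploits graded-commutativity (Remark~\ref{rem:colon}: $b\in I:(b)$, and more generally every element of $J$ lies in $I:J$) to get elements into $I:(b)$ for free. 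The argument will be a finite case analysis driven by the normal form \eqref{eq:rel2}–\eqref{eq:relH2}, organized by the position of the "pivot" pair $(s,t)$ relative to $(1,2)$ and by $\dim I_1$ and $\dim I_2$; I would set it up so that the generic case is handled uniformly and only a bounded number of degenerate configurations (e.g.\ $s=1$, or $I_1$ spanned by pivot vectors) need separate treatment.
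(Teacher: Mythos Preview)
Your reduction is exactly right: since $H^n(G)=0$ for $n\ge 3$, one has $(I:(b))_2=H^2(G)$, and the only thing to check is that $J_1\cdot H^1(G)=H^2(G)$, where $J=I:(b)$; you also correctly note $b\in J_1$ because $b^2=0$.

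Where you diverge from the paper is in the choice of case split. You condition on $\dim I_2\in\{0,1,2\}$ and then anticipate, for the middle case, a substantial analysis ``driven by the normal form \eqref{eq:rel2}--\eqref{eq:relH2}, organized by the position of the pivot pair $(s,t)$ \ldots\ and by $\dim I_1$ and $\dim I_2$''. The paper instead conditions on $\dim(b\cdot A_1)\in\{0,1,2\}$, and this is the decisive simplification: the argument becomes \emph{completely independent of $I$}. The only fact about $I$ ever used is that $0\in I_2$, so that any $a$ with $ab=0$ automatically lies in $J_1$. When $\dim(b\cdot A_1)=2$, already $b\cdot A_1=A_2$ and $b\in J_1$ finishes it. When $\dim(b\cdot A_1)=0$, every $a$ satisfies $ab=0$, so $J_1=A_1$. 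When $\dim(b\cdot A_1)=1$, the linear map $a\mapsto ab$ has a $(d-1)$-dimensional kernel, so on each of the two-dimensional spans $\langle a_1,a_2\rangle$ and $\langle a_s,a_t\rangle$ there is a nonzero element $c$ with $cb=0$; then $c\in J_1$, and multiplying $c$ against the complementary basis vector produces $a_1a_2$ (resp.\ $a_sa_t$). No use of the structure constants $\alpha_{ij},\beta_{ij}$, no analysis of where $(s,t)$ sits, and no dependence on $I_1$ whatsoever.

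Your route would also succeed if pushed through --- indeed, if you unfold your $\dim I_2=1$ case you will find it splits further according to the relative position of $b\cdot A_1$ and $I_2$, and the nontrivial subcase reduces precisely to the paper's $\dim(b\cdot A_1)=1$ computation. But the detour through $\dim I_2$ carries unnecessary baggage. (One small slip: ``$\dim I_2=0$'' means $I_1\cdot A_1=0$, not merely $a\cdot a'=0$ for $a,a'\in I_1$.)
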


\begin{proof}
 Set $A_\bullet= H^\bullet(G)$, and $d =\dim(A_1 )$.
By Proposition~\ref{prop:2rel}, $A_\bullet$ is quadratic and graded-commutative (wedge-commutative if $p = 2$), and $A_n = 0$
for $n \geq 3$.
Let $\calB = \{a_1 ,\ldots, a_d\}$ be a basis of $A_1$ as in \S~\ref{ssec:2rel cohom}.
Thus, $\{a_1 a_2 , a_s a_t \}$ is a basis of $A_2$, and one has the relations \eqref{eq:relH2}.

Let $I$ be an ideal of $A_\bullet$ lying in $\calL(A_\bullet )$, $I \neq A_+$, and $b \in  A_1\smallsetminus I_1$, and set
$J = I:(b)$.
Since $A_n = 0$ for $n \geq 3$, one has $A_2 \cdot (b) = 0 \subseteq I$, and thus $J_2 = A_2$.
In order to show that $J \in  \calL(A_\bullet )$, we need to show that $A_2$ is generated by
$J_1$, i.e., $J_1 \cdot A_1 = A_2$.
Since $b^2 = 0$, one has $b \in  J_1$.
One has three cases: $\dim(b \cdot A_1 ) = 0, 1, 2$.

Suppose first that $\dim(b \cdot A_1 ) = 0$, i.e., $ba = 0$ for every $a \in  A_1$.
Then $b\cdot A_1=0 \subseteq I_2$, and hence $A_1 \subseteq J_1$. Therefore, $J_1 \cdot A_1 = A_1 \cdot A_1 = A_2$.

Suppose now that $\dim(b \cdot A_1 ) = 1$, i.e., $b \cdot A_1$ is generated by $\alpha a_1 a_2 + \beta a_s a_t$,
for some $\alpha , \beta  \in  \F_p$, with $\alpha$, $\beta$ not both 0.
Then
\[ a_1 b = \lambda_1 (\alpha a_1 a_2 + \beta a_s a_t ),\qquad
a_2 b = \lambda_2 (\alpha a_1 a_2 + \beta a_s a_t )\]
for some $\lambda_1 , \lambda_2 \in\F_p$.
If $\lambda_1 = 0$ then $a_1 b = 0 \in  I_2$, and $a_1 \in  J_1$; similarly, if $\lambda_2 = 0$ then $a_2 \in  J_1$.
If $\lambda_1,\lambda_2\neq 0$, then $(a_1-\lambda_1/\lambda_2 a_2)b=0\in I_2$, and $(a_1-\lambda_1/\lambda_2 a_2)\in J_1$.
In both cases, $a_1 a_2 \in  J_1 \cdot A_1$.
Likewise,
\[a_s b = \mu_1 (\alpha a_1 a_2 + \beta a_s a_t ),\qquad
a_t b = \mu_2 (\alpha a_1 a_2 + \beta a_s a_t )\]
for some $\mu_1,\mu_2\in\F_p$.
If $\mu_1=0$ then $a_sb=0\in I_2$, and $a_s\in J_1$; similarly, if $\mu_2=0$ then $a_t\in J_1$.
If $\mu_1,\mu_2\neq0$, then $(a_s-\mu_1/\mu_2a_t)b=0\in I_2$, and $(a_s-\mu_1/\mu_2a_t)\in J_1$.
In both cases, $a_s a_t \in J_1 \cdot A_1$.
Therefore, $A_2 \subseteq J_1 \cdot A_1$, and this concludes the case $\dim(b \cdot A_1 ) = 1$.

Finally, suppose that $\dim(b \cdot A_1 )= 2$, i.e., $b\cdot A_1 = A_2$.
Since $b \in  J_1$, one has $A_2 = b \cdot A_1 \subseteq J_1 \cdot A_1$. This concludes the proof.
\end{proof}

Now we may prove Theorem~\ref{thm:intro}.

\begin{proof}[Proof of Theorem~1.2]
Let $G$ be a finitely generated pro-$p$ group with at most two defining relations, and suppose that $H^\bullet(G)$ is quadratic, and moreover $a^2 = 0$ for every $a \in H^1(G)$ if $p = 2$.
If $G$ has no relations, then it is a free pro-$p$ group, and by Proposition~\ref{prop:free} $H^\bullet(G)\simeq Q(V,V^{\otimes2})$ for some finite vector space $V$.
Hence, Example \ref{exam:UK}--(a) yields the claim.
If $G$ is one-relator, then by \cite[Prop.~4.2]{cq:onerel} $G$ is as in Example~\ref{exam:onerel}, and thus $H^\bullet(G)$ is universally Koszul.
Finally, if $G$ is two-relator, we apply Theorem~\ref{thm:2rel}.
\end{proof}

The next example shows that one may not extend Theorem~\ref{thm:intro} to finitely
generated pro-$p$ groups with quadratic $\F_p$-cohomology, cohomological dimension equal to 2 and more than two defining relations.

\begin{exam}\label{exam:square}\rm
 Let $G$ be a pro-$p$ group with minimal presentation
 \begin{equation}\label{eq:RAAG}
 G = \left\langle x, y, z, t \mid [x, y]x^{\lambda_1}y^{\mu_1} = [y, z]y^{\lambda_2}z^{\mu_2} = [z, t]z^{\lambda_3}t^{\mu_3}= 1\right\rangle,
 \end{equation}
with $\lambda_i,\mu_i\in p\Z_p$ for $i=1,2,3$ (and moreover $\lambda_i,\mu_i\in 4\Z_2$ if $p=2$).
Then $G$ is a {\sl generalised right-angled Artin pro-$p$ group} (cf. \cite[\S~5.1]{QSV}) associated to the
graph $\Gamma$ with vertices $\calV =\{x, y, z, t\}$ and edges $\calE=\{\{x, y\},\{y,z\},\{z,t\}\}$ --- i.e.,
$\Gamma$ is a path of length 3, and $\Gamma$ does not have the diagonal property.
In particular, if all $\lambda_i$'s and $\mu_i$'s are 0, then $G$ is the pro-$p$ completion of the (discrete) right-angled Artin group associated to $\Gamma$.

The cohomology algebra $H^\bullet(G)$ of $G$ is isomorphic to the exterior Stanley-Reisner algebra $\Lambda_\bullet(\Gamma)$ (cf. \cite[Thm. F]{QSV}).
In particular, $H^\bullet(G)$ is Koszul and $H^3(G) = 0$, but $H^\bullet(G)$ is not universally Koszul by Example~\ref{ex:RAAG no UK}.
\end{exam}


\section{Maximal pro-$p$ Galois groups}\label{sec:Galois}

For a field $\K$, let $\calG_{\K}$ denote the maximal pro-$p$ Galois group of $\K$.
If $\K$ contains a root of 1 of order $p$, then by Kummer theory one has an isomorphism
\begin{equation}\label{eq:Kummer}
 \K^\times/(\K^\times)^p\simeq H^1(\calG_{\K}),
\end{equation}
where $\K^\times=\K\smallsetminus\{0\}$ denotes the multiplicative group of $\K$.
Moreover, let $\mathrm{Br}_p(\K)$
denote the $p$-part of the Brauer group $\mathrm{Br}(\K)$ of $\K$ --- i.e, $\mathrm{Br}_p(\K)$ is the subgroup
of $\mathrm{Br}(\K)$ generated by all elements of order $p$.
Then one has an isomorphism
\begin{equation}\label{eq:brauer}
\mathrm{Br}_p(\K)\simeq H^2(\calG_{\K} )
\end{equation}
(cf. \cite[Thm.~6.3.4]{nsw:cohn}).

The {\sl mod-$p$ Milnor $K$-ring} of $\K$ is the graded $\F_p$-algebra 
\[
 K_\bullet^M(\K)_{/p}=\bigoplus_{n\geq0}K_n^M(\K)_{/p}=\frac{T_\bullet(\K^\times)}{(\Omega)}\otimes_{\Z}\F_p 
\]
where $T_\bullet(\K^\times)$ is the tensor algebra over $\Z$ generated by $\K^\times$, and $(\Omega)$ the two-sided ideal of $T_\bullet(\K^\times)$ generated by the elements $\alpha\otimes(1-\alpha)$ with $\alpha\in\K^\times\smallsetminus\{1\}$ (see, e.g., \cite[Def.~6.4.1]{nsw:cohn}).
Thus $K_\bullet^M(\K)_{/p}$ is quadratic, with $K_1^M(\K)_{/p}=\K^\times/(\K^\times)^p$.
If $\K$ contains a root of 1 of order $p$, by the Rost-Voevodsky Theorem the isomorphism \eqref{eq:Kummer} induces an isomorphism of $\F_p$-algebras 
\begin{equation}\label{eq:RV}
 K_\bullet^M(\K)_{/p}\overset{\sim}{\longrightarrow} H^\bullet(\calG_{\K})
\end{equation}
(cf. \cite{voev,weibel}, see also \cite[\S~24.3]{efrat:book}), and thus $H^\bullet(\calG_\K)$ is quadratic.
The following is a well-know fact on $\F_2$-cohomology of maximal pro-$2$ Galois groups of fields.

\begin{lem}\label{lemma:k2}
 Let $\K$ be a field containing $\sqrt{-1}$. Then the $\F_2$-cohomology algebra $H^\bullet(\calG_{\K})$ of the maximal pro-2 Galois group of $\K$ is wedge-commutative.
\end{lem}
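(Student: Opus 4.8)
The plan is to use the Rost--Voevodsky isomorphism \eqref{eq:RV} to transport the question to the mod-$2$ Milnor $K$-ring $K_\bullet^M(\K)_{/2}$, where the wedge-commutativity becomes the statement that $\{\alpha\}\cdot\{\alpha\}=0$ in $K_2^M(\K)_{/2}$ for every $\alpha\in\K^\times$. So the first step is to reduce, via \eqref{eq:RV}, to proving this identity in the Milnor $K$-ring. Since $H^\bullet(\calG_{\K})$ is generated in degree $1$ and graded-commutative, once we know $a\cdot a=0$ for every $a\in H^1(\calG_{\K})$ the whole algebra is wedge-commutative; thus it suffices to treat degree-$1$ classes, i.e. Kummer classes $\{\alpha\}$ with $\alpha\in\K^\times$.

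The key computation is the classical Steinberg-type relation $\{\alpha\}\cdot\{-\alpha\}=0$, which holds in $K_2^M(\K)$ for every $\alpha\in\K^\times$ (it follows from the defining relation $\{\alpha\}\cdot\{1-\alpha\}=0$ by the standard manipulation writing $-\alpha=(1-\alpha)/(1-\alpha^{-1})$, valid for $\alpha\neq 0,1$, and trivially for $\alpha=1$). Reducing mod $2$ and using bilinearity together with graded-commutativity, one has in $K_2^M(\K)_{/2}$ the chain
\[
0=\{\alpha\}\cdot\{-\alpha\}=\{\alpha\}\cdot\{-1\}+\{\alpha\}\cdot\{\alpha\}.
\]
Now I invoke the hypothesis $\sqrt{-1}\in\K$: this means $-1\in(\K^\times)^2$, so the Kummer class $\{-1\}$ is trivial in $K_1^M(\K)_{/2}=\K^\times/(\K^\times)^2$, and hence $\{\alpha\}\cdot\{-1\}=0$. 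Therefore $\{\alpha\}\cdot\{\alpha\}=0$ in $K_2^M(\K)_{/2}$, as desired.

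Transporting back through \eqref{eq:RV}, every $a\in H^1(\calG_{\K})$ satisfies $a\cdot a=0$; by graded-commutativity of the cup product and $1$-generation of $H^\bullet(\calG_{\K})$ this forces $a\cdot a=0$ for every homogeneous $a$, so $a\cdot a = 0$ for all $a \in H^\bullet(\calG_{\K})$, which is precisely wedge-commutativity. I expect the only subtle point to be the bookkeeping in the Steinberg relation $\{\alpha\}\cdot\{-\alpha\}=0$ (in particular handling the degenerate values $\alpha=1$ and being careful that the identity is an integral $K$-theory fact that one then reduces mod $2$); everything else is a formal consequence of $\sqrt{-1}\in\K$ and the structure already recorded in the excerpt. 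No genuine obstacle arises, which is why the statement is flagged as well known.
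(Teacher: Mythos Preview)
Your proof is correct and follows essentially the same route as the paper: reduce via the Rost--Voevodsky isomorphism to $K_\bullet^M(\K)_{/2}$, use the identity $\{\alpha\}\cdot\{\alpha\}=\{\alpha,-1\}$ (which the paper cites as a known fact and which you derive explicitly from the Steinberg relation $\{\alpha,-\alpha\}=0$), and conclude from $-1\in(\K^\times)^2$. Your version is slightly more detailed in justifying that identity and in passing from degree~$1$ to all degrees, but the argument is the same.
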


\begin{proof}
By \eqref{eq:RV}, it sufficies to show that $K_\bullet^M(\K)_{/2}$ is wedge-commutative.
For $\alpha,\beta\in\K^\times$, set $\{\alpha\}=\alpha(\K^\times)^2\in \K^\times/(\K^\times)^2$ and let $\{\alpha,\beta\}$ denote the image of $\alpha\otimes\beta$ in $K_2^M(\K)_{/2}$.
By definition, for every $\alpha\in\K^\times$ one has $\{\alpha\}\cdot \{\alpha\}=\{\alpha,-1\}\in K_2^M(\K)_{/2}$.
Hence, if $\sqrt{-1}\in\K$ then $-1\in(\K^\times)^2$, i.e., $\{-1\}$ is trivial in $K_1^M(\K)_{/2}=\K^\times/(\K^\times)^2$, and hence $\{\alpha\}\cdot\{\alpha\}=\{\alpha,-1\}$ is trivial in $K_2^M(\K)_{/2}$.
\end{proof}

From Theorem~\ref{thm:intro} one deduces the following.

\begin{cor}\label{cor:Galois}
 Let $\K$ be a field containing a root of 1 of order p (and also $\sqrt{-1}$ if $p = 2$), and suppose that the quotient 
 $\K^\times/(\K^\times)^p$ is finite. If $\rk(\mathrm{Br}_p(\K))\leq2$, then $H^\bullet(\calG_\K)$ is universally Koszul.
\end{cor}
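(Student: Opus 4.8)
The plan is to deduce Corollary~\ref{cor:Galois} directly from Theorem~\ref{thm:intro} by translating the field-theoretic hypotheses into group-theoretic ones about $\calG_{\K}$. First I would observe that the hypothesis that $\K^\times/(\K^\times)^p$ is finite, combined with the Kummer isomorphism \eqref{eq:Kummer} and \eqref{eq:H1}, says exactly that $\dim H^1(\calG_{\K})<\infty$, i.e.\ $\calG_{\K}$ is a finitely generated pro-$p$ group. Next, the hypothesis $\rk(\mathrm{Br}_p(\K))\leq 2$, together with the Brauer-group isomorphism \eqref{eq:brauer}, gives $\dim H^2(\calG_{\K})\leq 2$; by the discussion around \eqref{eq:H2}, $\dim H^2(\calG_{\K})$ equals the number of defining relations of $\calG_{\K}$, so $\calG_{\K}$ has at most two defining relations. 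Thus $\calG_{\K}$ satisfies the standing hypotheses of Theorem~\ref{thm:intro} on the group side.

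The remaining point is the quadraticity of the cohomology algebra (plus the condition $a^2=0$ for $p=2$), which Theorem~\ref{thm:intro} also requires. Here I would invoke the Rost--Voevodsky Theorem in the form \eqref{eq:RV}: since $\K$ contains a root of $1$ of order $p$, the Milnor $K$-ring isomorphism $K_\bullet^M(\K)_{/p}\overset{\sim}{\to}H^\bullet(\calG_{\K})$ holds, and since $K_\bullet^M(\K)_{/p}$ is quadratic by its very definition as a quotient of a tensor algebra by a degree-$2$-generated ideal, the algebra $H^\bullet(\calG_{\K})$ is quadratic. For $p=2$ the extra hypothesis $\sqrt{-1}\in\K$ lets me apply Lemma~\ref{lemma:k2}, which gives that $H^\bullet(\calG_{\K})$ is wedge-commutative, i.e.\ $a^2=0$ for every $a\in H^1(\calG_{\K})$ (and hence, since the algebra is $1$-generated, for every $a\in H^\bullet(\calG_{\K})$).

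With all hypotheses of Theorem~\ref{thm:intro} verified for $G=\calG_{\K}$, I would conclude immediately that $H^\bullet(\calG_{\K})$ is universally Koszul. The proof is therefore short: it is purely a matter of citing \eqref{eq:Kummer}, \eqref{eq:brauer}, \eqref{eq:RV}, Lemma~\ref{lemma:k2}, and Theorem~\ref{thm:intro} in sequence. There is no genuine obstacle; the only mild care needed is the $p=2$ case, where one must explicitly route the wedge-commutativity through Lemma~\ref{lemma:k2} rather than taking it for granted, and note that the identity of ``number of defining relations'' with $\dim H^2$ (equivalently $\rk\mathrm{Br}_p(\K)$) is exactly the remark made after Corollary~\ref{cor:intro} in the introduction. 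One could also phrase the whole corollary as an immediate specialization of Corollary~\ref{cor:intro}, but spelling out the Brauer-group reformulation makes the statement self-contained.
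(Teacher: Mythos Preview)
Your proposal is correct and follows essentially the same route as the paper: translate finiteness of $\K^\times/(\K^\times)^p$ into finite generation via \eqref{eq:Kummer} and \eqref{eq:H1}, translate $\rk(\mathrm{Br}_p(\K))\leq2$ into at most two defining relations via \eqref{eq:brauer} and \eqref{eq:H2}, get quadraticity from Rost--Voevodsky \eqref{eq:RV}, get wedge-commutativity for $p=2$ from Lemma~\ref{lemma:k2}, and then invoke Theorem~\ref{thm:intro}. The paper additionally spells out the three cases $\rk(\mathrm{Br}_p(\K))=0,1,2$ separately before citing Theorem~\ref{thm:intro}, but this is redundant since Theorem~\ref{thm:intro} already covers all three cases; your streamlined version is fine.
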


\begin{proof}
By \eqref{eq:Kummer} and \eqref{eq:H1}, the pro-$p$ group $\calG_{\K}$ is finitely generated.
Moreover, by \eqref{eq:brauer} and \eqref{eq:H2} $\calG_K$ has at most two defining relations.
Also, $H^\bullet(\calG_K)$ is quadratic by the Rost-Voevodsky Theorem.

If $\mathrm{Br}_p(\K)$ is trivial, then $H^n(\calG_{\K})=0$ for all $n\geq2$, namely, $\cd(\calG_{\K})= 1$,
and $\calG_{\K}$ is a free pro-$p$ group by Proposition~\ref{prop:free}.
If $\rk(\mathrm{Br}_p(\K))=1$, then $\calG_{\K}$ is one-relator, and if $p = 2$ then $H^\bullet(\calG_{\K})$ is wedge-commutative by Lemma~\ref{lemma:k2}.

Finally, if $\rk(\mathrm{Br}_p(\K))=2$, then $\calG_{\K}$ is a two-relator pro-$p$ group.
If $p\neq2$, then $\calG_{\K}$ is quadratically defined by Proposition~\ref{prop:2rel}.
If $p=2$, then $H^\bullet(\calG_{\K})$ is wedge-commutative by Lemma~\ref{lemma:k2}, and by Proposition~\ref{prop:2rel} $\calG_{\K}$ is quadratically defined.
Altogether, Theorem~\ref{thm:intro} yields the claim.
\end{proof}

The isomorphisms \eqref{eq:H2} and \eqref{eq:brauer} imply that Corollary~\ref{cor:Galois} is equivalent to Corollary~\ref{cor:intro}.

\begin{rem}\label{rem:PZ}\rm
 Let $G$ be the pro-$p$ group with minimal presentation \eqref{eq:RAAG}, with $\lambda_i=\mu_i=0$ for $i=1,2,3$. 
It was recently shown by I.~Snopce and P.~Zalesskii that $G$ does not occur as maximal pro-$p$ Galois group $\calG_{\K}$ for any field $\K$ containing a root of 1 of order $p$ (cf. \cite{SZ}).
Therefore, Example~\ref{exam:square} for this case does not provide a counterexample to Conjecture~\ref{conjecture:intro}.

On the other hand, if $G$ is a pro-$p$ group with minimal presentation \eqref{eq:RAAG} where not all $\lambda_i$'s and $\mu_i$'s are 0, it is not known (but for some special cases, see \cite[\S~5.7]{QSV}) whether $G$ may occur as maximal pro-$p$ Galois group $\calG_{\K}$ for some field $\K$ containing a root of 1 of order $p$.
We conjecture it does not for any choice of the $\lambda_i$'s and $\mu_i$'s; and therefore we expect Example~\ref{exam:square} not to provide counterexamples to Conjecture~\ref{conjecture:intro}.
\end{rem}

{\small \subsection*{acknowledgements}
The author wishes to thank the anonymous referee for her/his useful comments.
This paper was inspired by a talk (a {\sl mathematical salad}) that F.W. Pasini delivered in Milan, Italy, in Dec. 2018, and also by the discussions with him and with J. Min\'a\v{c} and N.D. T\^an on Koszul algebras and Galois cohomology --- therefore, the author is grateful to all these people. Also, the author wishes to remark
that J.P. Labute's work on Demushkin groups, mild pro-$p$ groups and filtrations of
groups influenced deeply the author's way of thinking about Koszul property in Galois cohomology
}

\end{document}